\numberwithin{equation}{section}
\newtheorem{maintheorem}{Theorem}
\newtheorem{theorem}{Theorem}[section]
\newtheorem*{theorem*}{Theorem}
\newtheorem{lemma}[theorem]{Lemma}
\newtheorem{corollary}[theorem]{Corollary}
\newtheorem{definition}[theorem]{Definition}
\theoremstyle{definition}{

}
\theoremstyle{remark}{

\newtheorem*{remark*}{Remark}

}
\newcommand{\R}{\mathbb R}
\newcommand{\N}{\mathbb N}
\newcommand{\Z}{\mathbb Z}
\newcommand{\E}{\mathbb{E}}
\renewcommand{\P}{\mathbb{P}}
\DeclareMathOperator{\var}{Var}  
\renewcommand{\epsilon}{\varepsilon}
\newcommand{\cG}{{\mathcal{G}}}
\newcommand{\cB}{{\mathcal{B}}}
\newcommand{\GPC}{\cG_{\textsc{pc}}}
\newcommand{\cF}{{\mathcal{F}}}
\newcommand{\given}{\, \big| \,}
\newcommand{\one}{\boldsymbol{1}}
\newcommand{\deq}{\stackrel{\scriptscriptstyle\triangle}{=}}
\newcommand{\K}{\mathcal{K}}
\newcommand{\GC}{{\mathcal{C}_1}} 
\newcommand{\tGC}{{\tilde{\mathcal{C}}_1}}
\newcommand{\TC}[1][\mathcal{C}_1]{#1^{(2)}} 
\newcommand{\hGC}{{\widehat{\mathcal{C}}_1}}
\newcommand{\Po}{\operatorname{Po}}
\newcommand{\Bin}{\operatorname{Bin}}
\newcommand{\Geom}{\operatorname{Geom}}
\newcommand{\taupc}{\tau_{\textsc{pc}}}
\begin{document}

\title[Anatomy of the giant component]{Anatomy of the giant component: \\ The strictly supercritical regime}

\author{Jian Ding, \thinspace Eyal Lubetzky and Yuval Peres}

\address{Jian Ding\hfill\break
Department of Statistics\\
UC Berkeley\\
Berkeley, CA 94720, USA.}
\email{jding@stat.berkeley.edu}
\urladdr{}

\address{Eyal Lubetzky\hfill\break
Microsoft Research\\
One Microsoft Way\\
Redmond, WA 98052-6399, USA.}
\email{eyal@microsoft.com}
\urladdr{}

\address{Yuval Peres\hfill\break
Microsoft Research\\
One Microsoft Way\\
Redmond, WA 98052-6399, USA.}
\email{peres@microsoft.com}
\urladdr{}

\begin{abstract}
In a recent work of the authors and Kim, we derived a complete description of the largest component of the Erd\H{o}s-R\'enyi random graph $\cG(n,p)$ as it emerges from the critical window, i.e.\ for $p = (1+\epsilon)/n$ where $\epsilon^3 n\to\infty$ and $\epsilon=o(1)$, in terms of a tractable contiguous model. Here we provide the analogous description for the supercritical giant component, i.e.\ the largest component of $\cG(n,p)$ for $p = \lambda/n$ where $\lambda>1$ is fixed. The contiguous model is roughly as follows: Take a random degree sequence and sample a random multigraph with these degrees to arrive at the kernel; Replace the edges by paths whose lengths are i.i.d.\ geometric variables to arrive at the 2-core; Attach i.i.d.\ Poisson Galton-Watson trees to the vertices for the final giant component. As in the case of the emerging giant, we obtain this result via a sequence of contiguity arguments at the heart of which are Kim's Poisson-cloning method and the Pittel-Wormald local limit theorems.
\end{abstract}

\maketitle

\section{Introduction}

The famous phase transition of the Erd\H{o}s and R\'{e}nyi random graph, introduced in 1959~\cite{ER59}, addresses the \emph{double jump} in the size of the largest component $\GC$ in $\cG(n,p)$ for $p=\lambda/n$ with $\lambda>0$ fixed. When $\lambda<1$ it is logarithmic in size with high probability (w.h.p.), when $\lambda=1$ its size has order $n^{2/3}$ and when $\lambda>1$ it is linear w.h.p.\ and thus referred to as the \emph{giant component}. Of the above facts, the critical behavior was fully established only much later by Bolloba\'as~\cite{Bollobas84} and {\L}uczak~\cite{Luczak90}, and in fact extends throughout the \emph{critical window} of $p=(1\pm\epsilon)/n$ for $\epsilon = O(n^{-1/3})$ as discovered in~\cite{Bollobas84}.

As far as the structure of $\GC$ is concerned, when $p=\lambda/n$ for fixed $\lambda<1$ in fact this component is w.h.p\ a tree of a known (logarithmic) size. The structure and size of the largest components was established in~\cites{LPW} and~\cite{Aldous2}, where in the latter work Aldous showed a remarkable connection between the critical random graph, continuum random trees and Brownian excursions. (See also the recent work~\cite{ABG} further studying the component structure at criticality, as well as~\cites{Bollobas2,JLR} for further details.)

As opposed to the tree-like geometry at and below criticality, the structure of the largest component becomes quite rich as soon as it emerges from the critical window, i.e.\ at $p=(1+\epsilon)/n$ where $\epsilon=o(1)$ and $\epsilon^3 n\to\infty$.
Despite many works devoted to the study of various properties of the largest component in this regime, the understanding of its structure remained fairly limited, illustrated by the fact that one of its most basic properties --- the diameter --- was determined asymptotically only lately in~\cite{DKLP2} and independently in~\cite{RW}. In the context of our present work, out of the various decomposition results on the structure of $\GC$ it is important to mention those by {\L}uczak~\cite{Luczak91}, highlighting the kernel as a random graph with a given degree sequence, and by Pittel and Wormald~\cite{PW}, featuring very precise estimates on the distribution of the size of $\GC$ and its 2-core (The 2-core of a graph is its maximum subgraph where all degrees are at least $2$. The kernel of $\GC$ is obtained from its 2-core by replacing every maximal path where all internal vertices have degree 2 by an edge.).

Recently, the authors and Kim~\cite{DKLP1} established a complete characterization of the structure of $\GC$ throughout the emerging supercritical regime, i.e.\ when $p=(1+\epsilon)/n$ with $\epsilon^3 n \to\infty$ and $\epsilon=o(1)$. This was achieved by offering a tractable contiguous \emph{model} $\tGC$, in other words, every graph property $\mathcal{A}_n$ that is satisfied by $\tGC$ w.h.p.\ (that is, a sequence of simple graphs such that $\P(\tGC \in \mathcal{A}_n)\to 1$) is also satisfied by $\GC$ w.h.p.
The contiguous model has a particularly simple description in the early stage of the formation of the giant, namely when $p=(1+\epsilon)/n$ with $\epsilon^3 n \to\infty$ and $\epsilon=o(n^{-1/4})$:
\begin{compactenum}[(i)]
\item Sample a random 3-regular multigraph on $2\lfloor Z\rfloor $ vertices via the configuration model, where $Z$ is Guassian with parameters $\mathcal{N}(\frac23\epsilon^3 n,\epsilon^3 n)$.
\item Subdivide each edge into a path of length i.i.d.\ Geometric($\epsilon$).
 \item Attach i.i.d.\ $\mathrm{Poisson}(1-\epsilon)$-Galton-Watson trees to each of the vertices.
\end{compactenum}
(In the above, a $\mathrm{Poisson}(\mu)$-Galton-Watson tree is the family tree of a Galton-Watson branching process with offspring distribution $\mathrm{Poisson}(\mu)$. See \S\ref{sec-prelim-conf} for the definition of the configuration model.)

The advantages of the aforementioned characterization were demonstrated in two companion papers~\cites{DKLP2,DLP}. The first of these
settled the natural question of the asymptotic behavior of the diameter throughout the emerging supercritical regime\footnote{Prior to our work, \cite{RW} had independently and using a different method obtained the asymptotic diameter in most but not all of the emerging supercritical regime. Following our work they managed to close this gap. Note that the estimate there is quite precise, whereas our work only aimed to obtain the leading order term throughout the regime.}, achieved by combining the structure result with a straightforward analysis of first-passage-percolation. The second established the order of the mixing time of the random walk on $\GC$, previously known only within the critical window and in the strictly supercritical regime, lacking the interpolating regime between them. See~\cite{DKLP1} for other applications of this result to easily read off key properties of $\GC$.

In this work we provide the analogous description for the strictly supercritical giant component, i.e.\ $p=(1+\epsilon)/n$ where $\epsilon>0$ is fixed.

\begin{maintheorem}\label{mainthm-struct}
Let $\GC$ be the largest component of $\cG(n,p)$ for $p = \lambda/n$ where $\lambda>1$ is fixed.
 Let $\mu<1$ be the conjugate of $\lambda$, that is
$\mu\mathrm{e}^{-\mu} = \lambda \mathrm{e}^{-\lambda}$. Then $\GC$ is contiguous to the following model $\tGC$:
\begin{enumerate}[\indent 1.]
  \item\label{item-struct-base} Let $\Lambda$ be Gaussian $\mathcal{N}\left(\lambda - \mu, 1/n\right)$ and let $D_u \sim \mathrm{Poisson}(\Lambda)$ for $u \in [n]$ be i.i.d., conditioned that $\sum D_u \one_{D_u\geq 3}$ is even.
  Let
  \[\mbox{$N_k = \#\{u : D_u = k\}$ \quad and \quad $N= \sum_{k\geq 3}N_k$}\,.\]
  Select a random multigraph $\K$ on $N$ vertices, uniformly among all multigraphs with $N_k$ vertices of degree $k$ for $k\geq 3$.
  \item\label{item-struct-edges} Replace the edges of $\K$ by paths of i.i.d.\ $\Geom(1-\mu)$ lengths. 
  \item\label{item-struct-bushes} Attach an independent $\mathrm{Poisson}(\mu)$-Galton-Watson tree to each vertex.
\end{enumerate}
That is, $\P(\tGC \in \mathcal{A}) \to 0$ implies $\P(\GC \in \mathcal{A}) \to 0$
for any set of graphs $\mathcal{A}$.
\end{maintheorem}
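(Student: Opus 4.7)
The plan is to prove contiguity by decomposing $\GC$ into three layers --- kernel $\K$, 2-core $\TC$, and full giant --- matching the three steps of the construction of $\tGC$ in reverse. At the outermost layer, the duality principle for supercritical random graphs yields that conditionally on $\TC$, the vertices of $\GC \setminus \TC$ attach to $\TC$ as i.i.d.\ $\Po(\mu)$-Galton-Watson trees, where $\mu$ is the conjugate of $\lambda$. At the middle layer, {\L}uczak's theorem asserts that $\TC$, conditional on its degree sequence, is uniform over multigraphs with that degree sequence; contracting its degree-$2$ paths yields the kernel $\K$ with edges replaced by paths of i.i.d.\ lengths, whose common geometric law $\Geom(1-\mu)$ is identified by comparing the expected sizes $\E|\TC|$ and $\E|\K|$ extracted from the Pittel-Wormald generating functions.

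The core of the argument is identifying the joint law of the kernel degrees $(D_u)_{u\in\K}$. Here, Kim's Poisson-cloning model replaces $\cG(n,p)$ by a model in which each vertex receives $\Po(\lambda)$ half-edges independently, decoupling edge randomness across vertices. A standard exploration argument --- using the fixed-point equation $\mu\mathrm{e}^{-\mu}=\lambda\mathrm{e}^{-\lambda}$ to quantify when the subtree rooted at a neighbor is finite --- shows that the $\TC$-degree of a typical vertex is $\Po(\lambda-\mu)$; after path contraction, a vertex joins $\K$ precisely when this degree is $\geq 3$, matching step \ref{item-struct-base} of the construction of $\tGC$. The Gaussian parameter $\Lambda \sim \mathcal{N}(\lambda-\mu, 1/n)$ then absorbs the $\sqrt n$-scale fluctuations of $|\K|$ given by the Pittel-Wormald local CLT, so that mixing over $\Lambda$ recovers the correct joint distribution of $(|\K|, |\TC|, |\GC|)$.

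The main obstacle will be assembling these three local identifications into a single global contiguity statement. Concretely, one must verify $\P(\tGC\in\A) \to 0 \Rightarrow \P(\GC\in\A)\to 0$ by bounding the Radon-Nikodym derivative $\mathrm{d}\P_{\GC}/\mathrm{d}\P_{\tGC}$ on the joint space of $(\K,\TC,\GC)$. The tree-attachment and path-contraction stages contribute Jacobians that can be estimated in closed form; the delicate step is the kernel, where the Pittel-Wormald joint local limit theorem must be used to match the densities of $(|\K|,|\TC|,|\GC|)$ under $\GC$ and under $\tGC$ up to $1+o(1)$ factors on the appropriate scales. The parity condition $2\mid\sum_u D_u \one_{D_u\geq 3}$ in step \ref{item-struct-base} is exactly what is needed for the kernel degree sequence to be realizable by a multigraph, and Kim's Poisson-cloning is the tool that tames the vertex-level dependence inherent in $\cG(n,p)$ throughout the chain of reductions.
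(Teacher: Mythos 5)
Your decomposition into kernel, 2-core, and giant matches the paper's chain of reductions, and you correctly invoke Poisson cloning and the Pittel--Wormald local limit theorems as the key tools. However, the conditional laws you cite as \emph{exact} --- i.i.d.\ $\Geom(1-\mu)$ path lengths and i.i.d.\ PGW($\mu$) attached trees --- are in fact only \emph{contiguous}, and establishing that contiguity is the content you would still need to supply. In the Poisson-configuration model (the exact distribution of the 2-core of Poisson cloning given $\taupc$), contracting degree-2 chains produces path lengths that are exchangeable but not i.i.d.\ geometric; matching expectations identifies the parameter but not the law. What Theorem~\ref{thm-Poisson-contiguity} actually proves is that the local densities of the number of edges given the kernel in the two models are comparable (Lemmas~\ref{lem-poisson-conf-upper} and~\ref{lem-poisson-geo-lower}), which requires a local CLT, not a first-moment match. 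Similarly, given the 2-core $H$, the attached trees in $\GC$ are uniform given their sizes, but the joint size vector is constrained by $\sum_u t_u = |\GC|-|H|$ with the distribution of $|\GC|$ fixed by $\cG(n,p)$; the proof must compare $\P(|\GC|=N\mid H)$ to the corresponding Borel-sum density (see~\eqref{eq-prob-ratio}), and this is where Theorem~\ref{thm-PW-local-limit} is actually used --- not in identifying the kernel degree distribution, which follows directly from the cut-off line description of Poisson cloning.

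You also do not explain how to pass from conditioning on the random cut-off time $\taupc$ --- which is what the Poisson-cloning 2-core is really conditioned on --- to conditioning on the prescribed Gaussian $\Lambda$. The paper's Theorem~\ref{thm-Lambda-contiguity} handles this, and the crucial ingredient is Lemma~\ref{lem-P(K|Lambda)-ratio}: the conditional distribution of the kernel degree sequence is uniformly comparable across $\Lambda = x$ for $|x - (\lambda-\mu)| = O(1/\sqrt n)$, so an $O(1/\sqrt n)$-window concentration estimate for $\taupc$ (Theorem~\ref{thm-Lambda-C-upper}) suffices without ever proving $\taupc$ is asymptotically Gaussian. This is precisely the point where the authors' earlier argument for the emerging regime broke, and it is the main technical novelty here; saying that $\Lambda$ ``absorbs the $\sqrt n$-scale fluctuations of $|\K|$'' does not by itself show that the degree-sequence distributions under $\Lambda$ and under $\taupc$ match, and you would need an argument of this kind to complete the first reduction.
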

(In the above, the notation $\Geom(1-\mu)$ denotes the geometric variable assuming the value $k\geq 1$ with probability $\mu^{k-1}(1-\mu)$.) We note that conditioning that $\sum D_u\one_{D_u \geq 3}$ is even can easily be realized by rejection sampling. Alternatively, this requirement can be replaced by adding a self-loop (counting 1 to the degree) to one of the vertices whenever the sum is odd.
Further note that in the above recipe for $\tGC$, Step~\ref{item-struct-base} constructs the kernel, Step~\ref{item-struct-edges} constructs the 2-core and finally the entire giant is constructed in the Step~\ref{item-struct-bushes}.

To demonstrate how one can easily derive nontrivial properties of $\GC$ from the above theorem, observe for instance that one can immediately infer that the longest path of degree-2 vertices in the 2-core is of size $\log_{1/\mu} n + O_\mathrm{P}(1)$.
Indeed, there are order $n$ edges in the kernel, hence the above quantity is simply the maximum of order $n$ i.i.d.\ geometric variables with mean $1-\mu$.

As another example, we note that it was well-known prior to this work that the giant component consists of an expander ``decorated'' using paths and trees of at most logarithmic size (see~\cite{BKW} for a concrete example of such a statement, used there to obtain the order of the mixing time on the fully supercritical $\GC$). This is immediately apparent from the above description of $\tGC$: indeed, it straightforward to show that the kernel is typically an expander (see, e.g., \cite{DKLP1}*{Lemma 3.5} where this was shown for the kernel in the emerging supercritical regime). The decorations spoiling its expansion as described in the decomposition results \`a la~\cite{BKW} are due to \begin{inparaenum}
  [(i)] \item edges subdivided into arbitrarily large paths via the geometric variables
  \item attached Poisson Galton-Watson trees of arbitrarily large size.
\end{inparaenum}
In both cases, the size of the decoration is constant in expectation (depending on $\lambda$) and has an exponential tail, reproducing the above depicted picture.

\subsection{Main techniques and comparison with~\cite{DKLP1}}
Our framework for obtaining the description of the largest component in $\cG(n,p)$, following the framework of~\cite{DKLP1}, consists of three main contiguity arguments. Our starting point is the Poisson cloning model $\GPC(n,p)$ due to~\cite{KimB}, which is contiguous to $\cG(n,p)$ (see \S\ref{sec-prelims}). The first step is to reduce the 2-core of $\GPC(n,p)$ to a random graph with a given (random) degree sequence (Theorem~\ref{thm-Lambda-contiguity} in \S\ref{sec:structure-2-core}). The second step reduces this to a model where a kernel is expanded to a 2-core by subdividing its edges via i.i.d.\ geometric variables (Theorem~\ref{thm-Poisson-contiguity} in \S\ref{sec:poisson-geometric}). The final step handles the attached trees and completes the proof of the main theorem (\S\ref{sec:structure-gc}).

It is already at the first step where the analysis of our previous work~\cite{DKLP1} breaks when $p=\lambda/n$ for fixed $\lambda>1$. Our original approach at this stage relied on showing that a certain stopping time $\taupc$ for a process that produces the 2-core (the so-called COLA algorithm due to Kim) is absolutely continuous w.r.t.\ Lebesgue measure (after normalizing it by its standard deviation). However, crucial in that proof was the fact that the $p=(1+\epsilon)/n$ and $\epsilon=o(1)$, e.g.\ illustrated by the fact that the size of the 2-core given the aforementioned $\taupc$ has a standard deviation smaller than the mean by a factor of $\sqrt{\epsilon}$, and as such is concentrated when $\epsilon\to0$. New arguments were required to establish Theorem~\ref{thm-Lambda-contiguity}, including the use of the powerful Pittel-Wormald~\cite{PW} local limit theorems already in this stage of the proof (cf.~\cite{DKLP1} where this tool was applied only in the second stage of the reductions). Finally, various arguments were simplified, either in places where the dependency in $\epsilon$ would no longer play a role or in situations where the fact that $1/\epsilon=O(1)$ allows direct application of standard local limit theorems.

\section{Preliminaries}\label{sec-prelims}

\subsection{Cores and kernels} The \emph{$k$-core} of a graph $G$, denoted by $G^{(k)}$, is its maximum subgraph $H \subset G$ where
every vertex has degree at least $k$. This subgraph is unique, and can be obtained by
repeatedly deleting any vertex whose degree is smaller than $k$ (in an arbitrary order).
The \emph{kernel} $\K$ of $G$ is obtained by taking its $2$-core
$\TC[G]$ minus its disjoint cycles, then repeatedly contracting any path where all internal vertices have degree-2 (replacing it by a single edge). Notice that, by
definition, the degree of every vertex in $\K$ is at least $3$. At
certain times the notation $\ker(G)$ will be useful to denote a
kernel with respect to some specific graph $G$. Note that $\ker(G)$
is usually different from the 3-core of $G$.

\subsection{Configuration model}\label{sec-prelim-conf}
This model, introduced by Bollob\'{a}s \cite{Bollobas1}, provides a remarkable method for constructing
random graphs with a given degree distribution, which is highly useful to their analysis. We describe
this for the case of random $d$-regular graphs for $d$ fixed (the model is similar for other degree distributions);
see \cites{Bollobas2,JLR,Wormald99} for additional information.

Associate each of the $n$ vertices with $d$ distinct points (also referred to as ``half-edges''), and consider a uniform perfect matching on these points. The random $d$-regular graph is obtained by contracting each cluster of the $d$ points corresponding to a vertex, possibly introducing multiple edges and self-loops. Clearly, on the event that the obtained graph is simple, it is uniformly distributed among all $d$-regular graphs, and furthermore, one can show that this event occurs with probability bounded away from $0$ (namely, with probability about $\exp(\frac{1-d^2}4)$). Hence, every event that occurs w.h.p.\ for this model, also occurs w.h.p.\ for a random $d$-regular graph.

\subsection{Poisson cloning}\label{sec-prelim-pc}
Following is a brief account on the Poisson cloning model $\GPC(n,p)$, introduced in \cites{KimA,KimB}.
Let $V$ be the set of $n$ vertices, and $\Po(\lambda)$ denote a Poisson random variable with mean $\lambda$.
Let $\{d(v)\}_{v \in V}$ be a sequence of i.i.d.\ $\Po(\lambda)$ variables with $\lambda = (n-1)p$. Then, take $d(v)$ copies of each vertex $v \in V$ and the copies of $v$ are called \emph{clones} of $v$ or simply \emph{$v$-clones}. Define $N_\lambda \deq \sum_{v \in V} d(v)$. If $N_\lambda$ is even, the multi-graph $\GPC(n,p)$ is obtained by generating a uniform random perfect matching of those $N_\lambda$ clones (e.g., via the configuration model, where every clone is considered to be a half-edge) and contracting clones of the same vertex. That is to say, each matching of a $v$-clone and a $w$-clone is translated into the edge $(v, w)$ with multiplicity. In the case that $v = w$, it contributes a self-loop with degree $2$. On the other hand, if $N_\lambda$ is odd, we first pick a uniform clone and translate it to a special self-loop contributing degree $1$ of the corresponding vertex (this special self-loop plays no important role in the model and can be neglected throughout the paper). For the remaining clones, generate a perfect matching and contract them as in the $N_\lambda$ even case.

The following theorem of \cite{KimB} states that the Poisson cloning model is \emph{contiguous} with the Erd\H{o}s-R\'{e}nyi model. Hence, it suffices to study Poisson cloning model in order to establish properties of the Erd\H{o}s-R\'{e}nyi model.

\begin{theorem}[\cite{KimB}*{Theorem 1.1}]\label{thm-poisson-ER}
Suppose $p \asymp 1/n$. Then there exist constants $c_1, c_2 >0$ such that for any collection $\cF$ of simple graphs, we have
\[ c_1\P(\cG_{\textsc{pc}}(n,p) \in \cF) \leq \P(\cG(n,p) \in \cF) \leq c_2\big(\P(\GPC(n,p) \in \cF)\big)^{1/2} + \mathrm{e}^{-n}\big)\,.\]
\end{theorem}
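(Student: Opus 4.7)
The plan is to exploit the structural fact that for simple $G$ with $e = |E(G)|$, both $\P_{\cG}(G)$ and $\P_{\GPC}(G)$ depend on $G$ only through $e$; hence the Radon--Nikodym derivative $\rho(G) = d\P_{\cG}/d\P_{\GPC}$ on simple $G$ is a function $\rho(e)$ of the edge count alone. The lower bound will then follow from a pointwise lower bound $\rho(e) \geq c_1$, while the upper bound comes from a truncation argument: bound the contribution of $\{\rho \leq M\}$ linearly by $M\,\P(\GPC \in \cF)$ and the contribution of $\{\rho > M\}$ by $\P_{\cG}(\rho > M) \lesssim 1/M$, then optimize $M$.

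The key computation is that for any simple $G$ with $e$ edges,
\[\P_{\GPC}(G) \;=\; \Big(\prod_v \frac{\lambda^{d(v)}\mathrm{e}^{-\lambda}}{d(v)!}\Big) \cdot \frac{\prod_v d(v)!}{(2e-1)!!} \;=\; \frac{\lambda^{2e}\,\mathrm{e}^{-n\lambda}}{(2e-1)!!},\]
because the uniform perfect matching on the $2e$ clones produces $G$ in exactly $\prod_v d(v)!$ out of $(2e-1)!!$ ways. Since $\P_{\cG}(G) = p^e(1-p)^{\binom{n}{2}-e}$ also depends on $G$ only through $e$, the ratio $\rho(e)$ is well defined. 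Extending $(2e-1)!! = \Gamma(2e+1)/(2^e\Gamma(e+1))$ smoothly in $e$, one checks that $\log\rho$ is convex (its second derivative is $\sim 1/e$ by standard digamma asymptotics, while all other terms in $\log\rho$ are linear in $e$) and attains its unique minimum at the common mean $\mu = n\lambda/2$, where $\rho(\mu) = \Theta(1)$ with a constant depending only on $\lambda$. In particular $\rho(e) \geq c_1 := \rho(\mu) > 0$ for every $e$, which also uses that $\P(\GPC \text{ simple}) = \Theta(1)$ via the classical Poisson approximation of the number of self-loops and multi-edges when $p \asymp 1/n$.

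The lower bound is then immediate: since $\cF$ consists of simple graphs and $\rho \geq c_1$ pointwise,
\[\P(\cG \in \cF) \;=\; \E_{\GPC}[\rho\,\one_{G \in \cF}] \;\geq\; c_1\,\P(\GPC \in \cF).\]
For the upper bound, Taylor-expanding $\log\rho$ around $\mu$ yields $\log\rho(e) = \log\rho(\mu) + (e-\mu)^2/(n\lambda) + O((e-\mu)^3/n^2)$, so $\{\rho(|E|) > M\}$ translates into $|E - \mu| \geq K(M)$ with $K(M)^2 \asymp n\lambda \log M$ (after propagating the quadratic expansion out of the bulk via a Chernoff argument). Since $|E|_{\cG} \sim \mathrm{Bin}(\binom{n}{2},p)$ has variance $\approx n\lambda/2$, the Chernoff bound on the Binomial gives $\P_{\cG}(\rho(|E|) > M) \leq 2/M$ for all $M$ up to $\mathrm{e}^{\Omega(n)}$. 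The truncation inequality
\[\P(\cG \in \cF) \;=\; \E_{\GPC}[\rho\,\one_\cF] \;\leq\; M\,\P(\GPC \in \cF) + \P_{\cG}(\rho > M) \;\leq\; M\,\P(\GPC \in \cF) + 2/M,\]
optimized at $M = 1/\sqrt{\P(\GPC \in \cF)}$, produces the main term $c_2\sqrt{\P(\GPC \in \cF)}$; the choice $M = \mathrm{e}^{cn}$ in the regime $\P(\GPC \in \cF) \leq \mathrm{e}^{-2cn}$ supplies the additive $\mathrm{e}^{-n}$ slack. The main obstacle is extending the pointwise estimate $\P_{\cG}(\rho > M) \leq C/M$ through the entire moderate-deviation range up to $|e-\mu| = \Theta(n)$, where the quadratic expansion of $\log\rho$ ceases to dominate and one must combine a careful Stirling expansion of $\log\rho(e)$ with sharp Chernoff tail bounds on the Binomial distribution.
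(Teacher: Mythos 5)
The paper does not actually prove this theorem: it is imported verbatim from Kim's work \cite{KimB}*{Theorem 1.1} and used as a black box, so there is no in-paper proof to compare against. Your proposal is therefore a stand-alone reproof, and its architecture is sound. The core computation is correct: on a simple graph $G$ with degree sequence $\{d(v)\}$ and $e$ edges, exactly $\prod_v d(v)!$ of the $(2e-1)!!$ perfect matchings of the clones produce $G$, and this cancels the $\prod_v d(v)!$ from the Poisson weights, yielding $\P_{\GPC}(G)=\lambda^{2e}\mathrm{e}^{-n\lambda}/(2e-1)!!$, a function of $e$ alone. Since $\P_{\cG}(G)=p^e(1-p)^{\binom n2-e}$ likewise depends only on $e$, the ratio $\rho(e)$ is well defined; it is log-convex (the only nonlinear-in-$e$ term in $\log\rho$ is $\log(2e-1)!!$, whose second derivative $4\psi'(2e+1)-\psi'(e+1)\sim 1/e$ is positive), its stationary point solves $\log p-\log(1-p)+2\psi(2e+1)-\log 2-\psi(e+1)-2\log\lambda=0$, i.e.\ $e\approx n\lambda/2$, and a Stirling computation gives $\log\rho=O(1)$ there. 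The left-hand inequality then follows immediately from $\rho\geq c_1$, and the right-hand inequality from the truncation $\E_{\GPC}[\rho\one_{\cF}]\leq M\,\P(\GPC\in\cF)+\P_{\cG}(\rho>M)$ with an optimized $M$ (or, equivalently and more cleanly, from Cauchy--Schwarz once one has a second-moment bound).

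The step you correctly flag as incomplete is the real content: the uniform tail bound $\P_{\cG}(\rho(|E|)>M)\leq C/M$ for $M$ up to $\mathrm{e}^{\Omega(n)}$. In the CLT window the quadratic expansion $\log\rho(e)\approx c+(e-\mu)^2/(n\lambda)$ does match the Gaussian decay of $|E|$ under $\cG(n,p)$ (variance $\approx n\lambda/2$), and the Mills-ratio factor even gives extra room. But pushing to $|e-\mu|=\Theta(n)$ requires showing that the full Stirling expansion of $\log\rho(e)$ agrees, to leading order in $n$, with the Binomial/Poisson large-deviation rate governing $-\log\P_{\cG}(|E|=e)$; both reduce to $n$ times the Legendre transform of the $\Po(\lambda/2)$ cumulant generating function evaluated at $e/n$, but this is an identity that must be verified term by term, not a consequence of an off-the-shelf Chernoff bound, and the cubic and higher corrections on both sides need to be tracked. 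An alternative packaging of the same work is to prove $\E_{\cG}[\rho]=\E_{\GPC}[\rho^2\one_{\{\text{simple}\}}]=O(1)$ directly, after which Cauchy--Schwarz gives $\P(\cG\in\cF)\leq(\E_{\cG}[\rho])^{1/2}\,\P(\GPC\in\cF)^{1/2}$ with no truncation, and the additive $\mathrm{e}^{-n}$ is absorbed by the super-exponential decay of $\P_{\cG}(|E|=e)\rho(e)$ for $e>Cn$. Either route is viable; as written, your proposal is a correct plan with the moderate- and large-deviation comparison left as a genuine (and honestly acknowledged) gap.
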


Note that as $p = \lambda/n$ for $\lambda>1$ fixed we may clearly replace the rate $\lambda=(n-1)p$ in the Poisson-cloning model definition simply by $\lambda=np$.

\subsection{Local limit theorem}
Throughout the proofs we will need to
establish local limit theorems for various parameters in the graph.
To this end, we will repeatedly apply the following special case of
a result in~\cite{Durrett}.
\begin{theorem}[\cite{Durrett}*{Ch.\ 2, Theorem 5.2},
reformulated]\label{thm-local} Let $X$ be a random variable on $\N$
with $\P(X = k) > 0$ for all $k\in\N$. Suppose that $\E X = \nu<
\infty$ and $\var X = \sigma^2 < \infty$. Let $X_i$ be i.i.d
distributed as $X$ and $S_m = \sum_{i=1}^m X_i$. Then as $m \to
\infty$, we have
\[\sup_{x \in \mathcal{L}_m}\left|\sqrt{m}\,\P \left(\frac{S_m -  m\nu}{\sqrt{m}} = x\right) - \frac{1}{\sigma\sqrt{2\pi}} \mathrm{e}^{-x^2/\sigma^2}\right| \to 0~,\]
where $\mathcal{L}_m = \{(z - m \nu )/ \sqrt{m}: z\in \Z\}$.
\end{theorem}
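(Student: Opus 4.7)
The plan is to prove this classical local CLT for integer-valued random variables via Fourier inversion. Since $X$ is integer-valued, its characteristic function $\phi(t) = \E e^{itX}$ is $2\pi$-periodic, and Fourier inversion on $[-\pi,\pi]$ together with the substitution $s=t\sqrt{m}$ gives, for $k = m\nu + x\sqrt{m}$ with $x\in\mathcal{L}_m$,
\[
\sqrt{m}\,\P(S_m = k) \;=\; \frac{1}{2\pi}\int_{-\pi\sqrt{m}}^{\pi\sqrt{m}} e^{-isx}\,\psi_m(s)\,ds,
\]
where $\psi_m(s) = e^{-is\nu\sqrt{m}}\phi(s/\sqrt{m})^m$ is the characteristic function of $(S_m-m\nu)/\sqrt{m}$. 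The Gaussian density $g(x) = (1/(\sigma\sqrt{2\pi}))\,e^{-x^2/(2\sigma^2)}$ on the right-hand side of the theorem is (up to the same factor $1/(2\pi)$) the Fourier inverse of $s\mapsto e^{-\sigma^2 s^2/2}$, so the claim reduces to showing that
\[
\int_{-\pi\sqrt{m}}^{\pi\sqrt{m}}\bigl|\psi_m(s)-e^{-\sigma^2 s^2/2}\bigr|\,ds \;+\; \int_{|s|>\pi\sqrt{m}} e^{-\sigma^2 s^2/2}\,ds \;\longrightarrow\; 0.
\]
Crucially, since $|e^{-isx}|=1$, the resulting bound on $|\sqrt{m}\,\P(S_m=k) - g(x)|$ is independent of $x$, so uniformity over $x\in\mathcal{L}_m$ is automatic.

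I would then split the integration into three pieces and estimate each separately. (a) For $|s|\le A$ with $A$ an arbitrary fixed constant, the Taylor expansion $\log\bigl(\phi(t)e^{-it\nu}\bigr) = -\tfrac12\sigma^2 t^2 + o(t^2)$ yields $\psi_m(s)\to e^{-\sigma^2 s^2/2}$ uniformly on compacts (this is the ordinary CLT derivation), so the contribution of this region vanishes as $m\to\infty$. (b) For $A\le|s|\le\delta\sqrt{m}$ with $\delta>0$ sufficiently small, the same expansion combined with the inequality $|1+z|\le e^{\Re z}$ yields the uniform bound $|\psi_m(s)|\le e^{-\sigma^2 s^2/4}$, so the tails of both $\psi_m$ and the Gaussian can be made arbitrarily small by choosing $A$ large. (c) For $\delta\sqrt{m}\le|s|\le\pi\sqrt{m}$, i.e.\ $|t|\in[\delta,\pi]$, the hypothesis $\P(X=k)>0$ for every $k\in\N$ forces $X$ to have span $1$, so $|\phi(t)|<1$ on $[-\pi,\pi]\setminus\{0\}$; continuity and compactness then yield $\sup_{\delta\le|t|\le\pi}|\phi(t)|\le 1-\eta$ for some $\eta=\eta(\delta)>0$, and thus $|\psi_m(s)|\le(1-\eta)^m$ decays exponentially and integrates to $o(1)$ over a range of length at most $2\pi\sqrt{m}$.

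The main obstacle is region (b), where one needs a second-order Taylor bound on $\log\phi$ that is valid uniformly for all $|t|\le\delta$ and yields an exponentially decaying envelope for $|\psi_m(s)|$ after rescaling. Choosing $\delta$ small enough in terms of $\sigma$ so that the cubic correction is dominated by $\sigma^2 t^2/4$, this bound follows from continuity of the second derivative of $\log|\phi|$ at $0$, which in turn requires only finiteness of $\sigma^2 = \E X^2 - \nu^2$. Combining the three estimates, the integrand in the first display converges to $0$ in $L^1$, proving the theorem uniformly in $x$.
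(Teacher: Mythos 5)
The paper does not prove this statement; it cites it from Durrett's textbook, so there is no internal proof to compare against. Your argument is the standard Fourier-inversion proof of the lattice local CLT and is, in outline, exactly the one Durrett gives: write both $\sqrt{m}\,\P(S_m=k)$ and the Gaussian density as Fourier integrals, bound the difference by an $L^1$ distance between $\psi_m$ and $e^{-\sigma^2 s^2/2}$ (which is automatically uniform in $x$ because $|e^{-isx}|=1$), and split the frequency range into (a) a fixed compact window handled by the ordinary CLT, (b) an intermediate range $A\le|s|\le\delta\sqrt m$ killed by an exponential envelope coming from the second-order Taylor expansion of $\phi$, and (c) a high-frequency range handled by the span-$1$ hypothesis $\P(X=k)>0$ for all $k$, which gives $\sup_{\delta\le|t|\le\pi}|\phi(t)|<1$. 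The three-piece decomposition, the reduction to $L^1$ convergence, and the use of aperiodicity in (c) are all correct and complete, closed by the usual $A\to\infty$ after $m\to\infty$ argument.

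One concrete error: in step (b) you appeal to the inequality $|1+z|\le e^{\Re z}$, which is false for general complex $z$ (e.g.\ $z=i$ gives $|1+i|=\sqrt 2>1=e^{0}$). The bound you want is nevertheless true, and the repair is routine: since $\var X<\infty$, the expansion $\phi(t)e^{-i\nu t}=1-\tfrac12\sigma^2 t^2+o(t^2)$ gives $|\phi(t)|=\bigl|1-\tfrac12\sigma^2 t^2+o(t^2)\bigr|\le 1-\tfrac14\sigma^2 t^2$ once $|t|\le\delta$ with $\delta$ small enough, and then $1-u\le e^{-u}$ yields $|\phi(t)|\le e^{-\sigma^2 t^2/4}$ and hence $|\psi_m(s)|=|\phi(s/\sqrt m)|^m\le e^{-\sigma^2 s^2/4}$ for $|s|\le\delta\sqrt m$, as you claim. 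You may also wish to note that the paper's displayed statement has a typographical slip: the exponent should read $e^{-x^2/(2\sigma^2)}$, matching your $g(x)$, not $e^{-x^2/\sigma^2}$.
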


\section{The 2-core of Poisson cloning}\label{sec:structure-2-core}

By Theorem~\ref{thm-poisson-ER}, the random graph $\cG(n,p)$ in our range of parameters is contiguous to the Poisson cloning model, where every vertex gets an i.i.d.\ $\Po(\lambda)$ number of half-edges (clones) and the final multigraph is obtained via the configuration model. In this section we will reduce the 2-core of the supercritical Poisson cloning model to a more tractable model --- a random graph uniformly chosen over all graphs with a given degree sequence.

\begin{definition}[Poisson-configuration model with parameters $n$ and $\lambda$]\label{def-poisson-conf-model}\mbox{}
\begin{enumerate}[(1)]
\item Let $\Lambda\sim \mathcal{N}\left(\lambda - \mu, 1/n\right)$, consider $n$ vertices and assign an independent variable $D_u \sim \Po(\Lambda)$ to each vertex $u$. Let $N_k = \#\{u : D_u = k\}$ and $N = \sum_{k\geq 2}N_k$.
\item Construct a random multigraph on $N$ vertices, uniformly chosen over all graphs with $N_k$ degree-$k$ vertices for $k\geq 2$ (if $N$ is odd, choose a vertex $u$ with $D_u=k \geq 2$ with probability proportional to $k$, and give it $k-1$ half-edges and a self-loop).
\end{enumerate}
\end{definition}

\begin{theorem}\label{thm-Lambda-contiguity}
Let $G \sim \GPC(n,p)$ be generated by the Poisson cloning model for
$p=\lambda/n$, where $\lambda > 1$ is fixed.
Let $\TC[G]$ be its $2$-core, and $H$
be generated by the Poisson-configuration model corresponding to $n,p$. Then for any set of graphs $\mathcal{A}$ such that $\P(H \in
\mathcal{A}) \to 0$, we have $\P(\TC[G] \in \mathcal{A}) \to 0$.
\end{theorem}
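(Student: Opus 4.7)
The proof proceeds in three stages: a reduction to degree sequences, a local limit theorem for the 2-core degree sequence under $\GPC$, and an identification of the result with the Gaussian-mixture Poisson-configuration model.

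\emph{Stage 1 (Reduction to the degree sequence).} In the Poisson cloning construction, the peeling procedure that extracts the 2-core reads only the partial matching among clones of the repeatedly peeled vertices, so the restriction of the uniform matching of all clones to the surviving clones inside the 2-core is itself uniformly distributed. Consequently, conditional on the degree sequence $\boldsymbol{N} := (N_k)_{k \geq 2}$, the multigraph $\TC[G]$ is uniform over all multigraphs with that degree sequence; in $H$ this conditional uniformity holds by construction. Therefore, for every graph $G_0$ with degree sequence $\boldsymbol{n}$,
\[
\frac{\P(\TC[G] = G_0)}{\P(H = G_0)} \;=\; \frac{\P(\boldsymbol{N}_{\TC[G]} = \boldsymbol{n})}{\P(\boldsymbol{N}_H = \boldsymbol{n})},
\]
and it suffices to show that this ratio is bounded by a constant on a set of $\boldsymbol{n}$ carrying $1-o(1)$ of the mass under $\P_H$, which is the standard criterion that converts $\P_H(\mathcal{A}) \to 0$ into $\P_{\TC[G]}(\mathcal{A}) \to 0$.

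\emph{Stage 2 (Joint local limit for $\boldsymbol{N}_{\TC[G]}$).} The plan here is to invoke the Pittel--Wormald~\cite{PW} local limit theorems to pin down a joint density-type estimate for $\boldsymbol{N}_{\TC[G]}$ around its typical value. The classical computation based on Kim's Poisson-cloning / COLA peeling gives
\[
\E[N_k] \;=\; (1+o(1))\,n\,\P\bigl(\Po(\lambda-\mu) = k\bigr) \qquad (k \geq 2),
\]
with joint Gaussian fluctuations of order $\sqrt{n}$ and explicit covariance. After a standard truncation showing that degrees $k \gtrsim \log n$ are negligible in both models, one is reduced to a finite-dimensional joint local limit theorem for the low-degree counts, coupled with the total vertex and half-edge counts of the 2-core, which falls squarely within the Pittel--Wormald framework.

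\emph{Stage 3 (Matching the Gaussian mixture).} I then compute the law of $\boldsymbol{N}_H$ directly. Writing $\Lambda = \lambda - \mu + Z/\sqrt{n}$ for a standard Gaussian $Z$, the vector $(N_k)$ under $H$ is, conditional on $\Lambda$, generated by i.i.d.\ $\Po(\Lambda)$ coordinates; integrating out $Z$ adds precisely an order-$\sqrt{n}$ Gaussian contribution to the global total $\sum_k k N_k$. A direct generating-function computation shows that the resulting joint density of $\boldsymbol{N}_H$ matches the density from Stage 2 up to lower-order terms, so the two local densities agree up to a factor $O(1)$ on the typical window. The parity conditioning on $\sum D_u \one_{D_u \geq 3}$ in $H$ and the analogous odd-parity self-loop convention in $\GPC$ contribute only factors bounded between two positive constants. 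Combined with Stage 1, this yields the desired contiguity.

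\emph{Main obstacle.} The crux of the argument, and what forces a departure from the method of \cite{DKLP1}, is the joint local limit theorem for the full degree sequence together with the verification that the prior $\Lambda \sim \mathcal{N}(\lambda - \mu, 1/n)$ reproduces \emph{exactly} the global $\sqrt{n}$-fluctuation of the 2-core's half-edge count. In the emerging supercritical regime of \cite{DKLP1} one could exploit a stopping-time absolute-continuity argument, since the 2-core's size conditioned on that stopping time had standard deviation a factor $\sqrt{\epsilon}$ smaller than its mean and was concentrated when $\epsilon \to 0$; at fixed $\lambda > 1$ that separation is lost, and the Pittel--Wormald machinery must be brought to bear at this first reduction step rather than being deferred to later stages.
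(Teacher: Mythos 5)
Your Stage~1 reduction is sound in spirit: conditioned on the degree sequence, both $\TC[G]$ and $H$ are uniform configuration-model multigraphs, so the contiguity question reduces to comparing the laws of the degree-count vectors. However, your Stages~2 and~3 take a genuinely different, and in my view incomplete, route from the paper, and the crucial step is asserted without support.

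The paper's argument does not compare the two degree-sequence distributions via a local limit theorem at all. Instead it exploits an \emph{exact} distributional identity: conditioned on the COLA stopping value $\taupc = x$, the $2$-core of $\GPC$ has precisely the same law as the Poisson-configuration model conditioned on $\Lambda = x$. This reduces the problem to a one-dimensional comparison of the law of $\taupc$ with that of $\Lambda$. The paper then shows (Theorem~\ref{thm-Lambda-C-upper}) that $\taupc$ concentrates in a window of width $O(1/\sqrt n)$ around $\lambda-\mu$ --- this uses only the CLT-level content of Pittel--Wormald together with the contiguity transfer --- and (Lemma~\ref{lem-P(K|Lambda)-ratio}) that the conditional degree-sequence probabilities vary only by a bounded multiplicative factor as the conditioning value moves across that window. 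Crucially, no local limit theorem for $\taupc$ or for the degree sequence is ever needed; the paper explicitly says it is replacing the earlier $\taupc$-absolute-continuity argument precisely because such fine control is the hard part.

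Your Stage~2 reinstates that difficulty. You claim a joint local limit theorem for the full degree-count vector $(N_k)_{k\ge 2}$ of $\TC[G]$ (after truncation) ``falls squarely within the Pittel--Wormald framework.'' It does not: the Pittel--Wormald result used here (Theorem~\ref{thm-PW-local-limit}) is a three-dimensional local limit theorem for $\left(|H|,\,|\GC|-|H|,\,|E(H)|-|H|\right)$, not for the entire degree-count vector, whose relevant dimension is growing with $n$. Proving such a high-dimensional LLT would be a substantial new result. Moreover, a pointwise comparison of $\P(\boldsymbol N_{\TC[G]} = \boldsymbol n)$ against $\P(\boldsymbol N_H = \boldsymbol n)$ implicitly requires a local limit theorem for $\taupc$ itself (since $\boldsymbol N_{\TC[G]}$ is a mixture over $\taupc$), which cannot be extracted from the contiguity result Theorem~\ref{thm-poisson-ER}: contiguity transfers $o(1)$ events but not point probabilities. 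So the load-bearing step of your proposal is exactly the obstacle you correctly identify in your final paragraph, and the proposal does not actually surmount it. The fix is the paper's: replace the degree-sequence LLT with the exact conditional identity plus the coarse concentration of $\taupc$ and the stability estimate of Lemma~\ref{lem-P(K|Lambda)-ratio}.
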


To prove the above Theorem~\ref{thm-Lambda-contiguity} we outline a specific way to generate $\GPC(n,p)$ due to~\cite{KimB}. Let $V$ be a set of $n$ vertices and consider $n$ horizontal line segments ranging from $(0, j)$ to $(\lambda, j)$, for $j=1, \ldots, n$ in $\R^2$. Assign a Poisson point process with rate $1$ on each line segment independently. Each point $(x, v)$ in these processes is referred to as a $v$-clone with the assigned number $x$. The entire set of Poisson point processes is called a Poisson $\lambda$-cell.

Given the Poisson $\lambda$-cell, various schemes can be used to generate a perfect matching on all points. One such way is the ``Cut-Off Line Algorithm'' (COLA), defined in \cite{KimA}, which is useful in finding the 2-core $\TC[G]$, described as follows.
The algorithm maintains the position of a ``cut-off line'', a vertical line in $\R^2$ whose initial $x$-coordinate equals $\lambda$, and gradually moves leftwards. In the beginning the line is positioned at $\lambda$, and as the line progresses it matches previously unmatched clones.
To describe the stopping rule of the algorithm, we need the following definitions. At any given point, we call a vertex $v \in V$ (and its unmatched clones) \emph{light} if it has at most one unmatched clone (and \emph{heavy} otherwise). At the beginning of the process, all the light clones are placed in a stack. The order by which these clones are inserted into the stack can be arbitrary, as long as it is oblivious of the coordinates assigned to the clones.
Define $\taupc$ to be the $x$-coordinate of the cut-off line once the algorithm terminates, i.e., at the first time when there are no light clones. We will argue that $\taupc$ is concentrated about
$\lambda-\mu$ with a standard deviation of $1/\sqrt{n}$, yet before doing so we explain its role in determining the structure of the 2-core of the graph. The above algorithm repeatedly matches light clones until all of them are exhausted --- precisely as the cut-off line reaches $\taupc$. As stated in \S\ref{sec-prelims}, the $2$-core of a graph can be obtained by repeatedly removing vertices of degree at most $1$ (at any arbitrary order), thus it is precisely comprised of all the unmatched clones at the moment we reach $\taupc$.

\begin{algorithm}
\[
\fbox{\parbox{.98\linewidth}{\mdseries

\begin{enumerate}[1.]
\item \label{it-cola-1} Let $(x,u)$ be the first clone in the stack. Move the cut-off line leftwards until it hits an unmatched clone $(y,v) \neq (x,u)$.
\item Remove $(x,u)$ from the stack, as well as $(y,v)$ (if it is there).
\item Match $(x,u)$ and $(y,v)$ and re-evaluate $u$ and $v$ as light/heavy.
\item Add any clone that just became \emph{light} into the stack.
\item If the stack is nonempty return to Step~\ref{it-cola-1}, otherwise quit and denote the stopping time by $\taupc$ (final $x$-coordinate of cut-off line).
\end{enumerate}
}}
\]
\caption{\textsc{Cut-Off Line Algorithm}}
\label{algorithm-cola}
\end{algorithm}

The following theorem establishes concentration for $\taupc$. Its proof will follow from known estimates on the concentration of $|\TC|$ in $\cG(n,p)$.
\begin{theorem}[Upper bound on the window of $\taupc$]\label{thm-Lambda-C-upper}
There exist constants $C,c > 0$ so that for all
$\gamma>0$ with $\gamma = o\big(\sqrt{n}\big)$, the following holds:
\begin{equation}\label{eq-Lambda-C-window}
\P\left(|\taupc - (\lambda-\mu)| \geq \gamma/ \sqrt{ n} \right) \leq
C \mathrm{e}^{-c \gamma^2}\,.
\end{equation}
\end{theorem}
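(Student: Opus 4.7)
The plan is to derive the tail bound on $\taupc$ from the (known) Gaussian-tail concentration of the 2-core size $|\TC[G]|$ (in either $\GPC(n,\lambda/n)$ directly, or in $\cG(n,\lambda/n)$ transferred via Theorem~\ref{thm-poisson-ER}), combined with a monotone structural identity linking $|\TC[G]|$ and $\taupc$ through the geometry of the Poisson $\lambda$-cell.

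The identity I would establish first is
\[
|\TC[G]| \;=\; M(\taupc), \qquad M(t) \,:=\, \#\bigl\{v \in V : v \text{ has at least 2 clones with $x$-coord in $[0,t]$}\bigr\},
\]
and $t \mapsto M(t)$ is non-decreasing. This comes from tracking the COLA dynamics: the invariant maintained throughout the run is that every clone whose $x$-coordinate lies to the right of the current cut-off line has already been matched. A vertex $v$ with $\geq 2$ clones in $[0,\taupc]$ never becomes light---its two leftmost clones are unreachable by either the sweep or the stack---so $v$ ends in the 2-core with all of its $[0,\taupc]$-clones unmatched; conversely, a vertex with $\leq 1$ clone in $[0,\taupc]$ becomes light once its clones in $(\taupc,\lambda]$ are processed, and its last remaining clone is then paired off via the stack.

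I would then invoke two concentration statements. For any deterministic $t$, $M(t) \sim \mathrm{Bin}(n, f(t))$ with $f(t) := 1 - (1+t)\mathrm{e}^{-t} = \P(\Po(t) \geq 2)$, since the per-vertex indicators are i.i.d.\ under the Poisson-cell construction; Hoeffding's inequality then yields $\P(|M(t) - n f(t)| \geq \gamma\sqrt{n}) \leq 2\mathrm{e}^{-2\gamma^{2}}$. On the other hand, the classical Pittel--Wormald analysis of the 2-core of $\cG(n,\lambda/n)$ (for $\lambda > 1$ fixed) supplies
\[
\P\bigl(\bigl||\TC[G]| - n\rho\bigr| \geq \gamma\sqrt{n}\bigr) \;\leq\; C\mathrm{e}^{-c\gamma^{2}} \qquad(\gamma = o(\sqrt{n})),
\]
where $\rho = f(\lambda - \mu)$; this transfers to $\GPC(n,\lambda/n)$ with possibly adjusted constants via Theorem~\ref{thm-poisson-ER}.

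To conclude, write $L = \lambda - \mu$ and consider the upper tail first. On the event $\taupc \geq L + \gamma/\sqrt{n}$, monotonicity of $M$ and the identity yield $|\TC[G]| \geq M(L + \gamma/\sqrt{n})$. Since $f'(L) = L\mathrm{e}^{-L} > 0$ and $\gamma = o(\sqrt{n})$, Taylor-expanding $f$ gives $nf(L + \gamma/\sqrt{n}) - n\rho = \gamma f'(L)\sqrt{n} + O(\gamma^{2}) \geq \tfrac{2}{3}\gamma f'(L)\sqrt{n}$ for $n$ large, so this event forces at least one of $|\TC[G]| - n\rho \geq \tfrac{1}{3}\gamma f'(L)\sqrt{n}$ or $nf(L + \gamma/\sqrt{n}) - M(L + \gamma/\sqrt{n}) \geq \tfrac{1}{3}\gamma f'(L)\sqrt{n}$; each is bounded by $C'\mathrm{e}^{-c'\gamma^{2}}$ via the two concentration estimates above. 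The lower tail $\taupc \leq L - \gamma/\sqrt{n}$ is handled symmetrically using $|\TC[G]| \leq M(L - \gamma/\sqrt{n})$. The main technical obstacle here is the careful verification of the structural identity $|\TC[G]| = M(\taupc)$ from the COLA dynamics---specifically confirming that every clone with $x$-coord above $\taupc$ has been matched by termination, and that vertices with at least two clones in $[0,\taupc]$ retain all of those clones unmatched throughout the run.
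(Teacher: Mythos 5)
Your proof is correct and follows essentially the same strategy as the paper: derive the concentration of $\taupc$ from the known concentration of $|\TC[G]|$ (via Pittel--Wormald, transferred through Theorem~\ref{thm-poisson-ER}), combined with the observation that $|\TC[G]|$ equals the number of vertices with at least two clones to the left of the cut-off line, a binomial quantity whose success probability $p^+_2(x)=1-\mathrm{e}^{-x}-x\mathrm{e}^{-x}$ has derivative bounded away from zero near $\lambda-\mu$. You make the COLA structural identity $|\TC[G]|=M(\taupc)$ explicit and bound $M(t)$ at a deterministic threshold via Hoeffding, whereas the paper states the same fact as a conditional binomial law given $\taupc=x$; these are equivalent renderings of the identical argument.
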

\begin{proof}
It is well known that in the super-critical random graph $\cG(n,\lambda/n)$ with $\lambda>1$ fixed w.h.p.\ all components except one (the giant) are trees or unicyclic (see e.g.~\cite{JLR}*{Theorem~5.12}) and in addition the total expected number of vertices which belong to unicyclic components is bounded (see e.g.~\cite{Bollobas2}*{Theorem~5.23}). In particular this implies that the $2$-core of $\cG(n,\lambda/n)$ for $\lambda>1$ fixed consists of $\TC$, the 2-core of the giant component, plus disjoint cycles whose total number of vertices is w.h.p.\ at most, say, $O(\log n)$.

A special case of a powerful result of Pittel and Wormald~\cite{PW} (the full statement of this theorem appears later as Theorem~\ref{thm-PW-local-limit}) implies that \[\E|\TC|=(1-\mu)(1-\tfrac{\mu}{\lambda})n\] and in addition $(|\TC|-\E|\TC|)/\sqrt{n}$ is in the limit Gaussian with variance of $O(1)$.
Combining these facts, there exists some fixed $c > 0$ so that a random graph $F \sim \cG(n,p)$ in our regime has a 2-core $\TC[F]$ whose size satisfies
\[ \P\left(\big||\TC[F]| - (1-\mu)(1-\tfrac{\mu}{\lambda})n|\big| \geq \gamma/\sqrt{n}\right) \leq \exp(-c\gamma^2) \,.\]
By Theorem~\ref{thm-poisson-ER} it then follows that for $G \sim \cG_{\textsc{pc}}(n,p)$,
\[ \P\left(\big||\TC[G]| - (1-\mu)(1-\tfrac{\mu}{\lambda})n|\big| \geq \gamma/\sqrt{n}\right) \leq C \exp(-c\gamma^2)\,,\]
where $C = 1/ c_1$ from that theorem.

To conclude the proof, observe on the event $\taupc = x$, the size of $\TC[G]$ is binomial with parameters $\Bin(n,p_2^+(x))$ where
\[ p^+_2(x) \deq \sum_{k\geq 2} \mathrm{e}^{-x} \frac{x^k}{k!} = 1 - \mathrm{e}^{-x} - x \mathrm{e}^{-x}\,.\]
It is easy to verify that $x=\lambda - \mu$ is the unique positive solution of $p^+_2(x) = (1-\mu)(1-\tfrac{\mu}{\lambda})$.
This function further has $\frac{d}{dx} p^+_2(x) = x e^{-x}$ thus its derivative is uniformly bounded away from 0 in the interval
  $[\tfrac12(\lambda-\mu),2(\lambda-\mu)]$. In particular, shifting $\taupc$ by $\gamma/\sqrt{n}$ would shift the mean of the above
  variable by order $\gamma/\sqrt{n}$ and the desired result
  follows. 
\end{proof}
The above theorem established the concentration of $\taupc$ and as such reduced the Poisson-cloning model to the Poisson-configuration model given the event $\taupc = \lambda-\mu+o(1)$. With this in mind, the argument in the proof above stating that the disjoint cycles outside the giant component have bounded expectation in $\cG(n,p)$, along with the contiguity between $\cG(n,p)$ and Poisson-cloning, now immediately yield the following:
\begin{corollary}\label{cor-cycles}
Let $H$ be generated by the Poisson-configuration model given $\Lambda = \ell$, where $\ell=\lambda-\mu+o(1)$.
Define $H'$ as the graph obtained by deleting every disjoint cycle from $H$.
Let $N_2$ be the number of vertices with degree $2$ in $H$, and $N'_2$ be the corresponding quantity for $H'$.
Then $N'_2 = N_2 + O_\mathrm{P} (1)$.
\end{corollary}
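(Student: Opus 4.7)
The plan is to reduce $N_2 - N_2'$ to a cycle-vertex count and then bound it in expectation. Every disjoint cycle of $H$ is a 2-regular connected component, so all its vertices have degree exactly $2$ in $H$, and passing from $H$ to $H'$ deletes precisely these components, hence exactly these degree-2 vertices. Thus $N_2 - N_2'$ equals the total number of vertices lying on disjoint cycles of $H$, and it suffices to show this quantity has expectation $O(1)$ uniformly in $\ell = \lambda - \mu + o(1)$, since Markov's inequality will then yield the claim.

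Following the hint preceding the corollary, one natural route is via contiguity. The proof of Theorem~\ref{thm-Lambda-C-upper} already invoked the fact that in $\cG(n,\lambda/n)$ the expected number of vertices in unicyclic components is bounded (by \cite{Bollobas2}*{Theorem~5.23}), and Theorem~\ref{thm-poisson-ER} transfers this $O_\mathrm{P}(1)$ bound to the 2-core of $\GPC(n,p)$. The conditional identification --- that the 2-core of $\GPC(n,p)$ given $\taupc = \ell$ is distributed as the Poisson-configuration model with $\Lambda = \ell$ --- together with the concentration of $\taupc$, would then give the conditional bound for typical $\ell$. The main obstacle in this route is the deconvolution step: the window of $\taupc$ around $\lambda-\mu$ from Theorem~\ref{thm-Lambda-C-upper} has width only $O(1/\sqrt{n})$ while the corollary allows $\ell$ to vary in the much larger $o(1)$-window, so an averaged bound does not immediately yield a pointwise one.

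The cleaner way, which bypasses the deconvolution entirely, is a direct first-moment computation in the configuration model. On the (high-probability) event that $N_2 = (1+o(1))\,n\ell^2\mathrm{e}^{-\ell}/2$ and $2m = (1+o(1))\,n\ell(1-\mathrm{e}^{-\ell})$, the standard formula gives the expected number of $k$-cycle components (for $k\geq 3$) as
\[
\binom{N_2}{k}\cdot\frac{(k-1)!}{2}\cdot\frac{2^{k}\,(2m-2k-1)!!}{(2m-1)!!} \;\sim\; \frac{1}{2k}\Bigl(\frac{N_2}{m}\Bigr)^{k},
\]
with analogous lower-order contributions from $k=1$ (self-loops) and $k=2$ (multi-edges). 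The defining relation $\mu\mathrm{e}^{-\mu}=\lambda\mathrm{e}^{-\lambda}$ rearranges, via $\ell=\lambda-\mu$, to $\mathrm{e}^{-\ell}=\mu/\lambda$, which yields
\[
\frac{N_2}{m} \;\longrightarrow\; \frac{\ell\mathrm{e}^{-\ell}}{1-\mathrm{e}^{-\ell}} \;=\; \mu \;<\; 1.
\]
Weighting by $k$ (the number of vertices per cycle) and summing the resulting geometric series gives
\[
\E[\,\#\text{ cycle-vertices of }H\,] \;\sim\; \sum_{k\geq 1} k\cdot\frac{\mu^k}{2k} \;=\; \frac{\mu}{2(1-\mu)} \;=\; O(1),
\]
uniformly in $\ell = \lambda - \mu + o(1)$. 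The only routine bookkeeping is a uniform geometric tail bound for large $k$, which follows directly from the exact formula above and the fact that $\mu<1$. Together with Markov's inequality this completes the proof.
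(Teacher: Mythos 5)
Your proof is correct, and it takes a genuinely different route from the paper's. The paper dispatches the corollary in one sentence: by \cite{Bollobas2}*{Theorem~5.23} the expected number of vertices on unicyclic components in $\cG(n,\lambda/n)$ is bounded; by the left inequality of Theorem~\ref{thm-poisson-ER} this bound transfers to $\GPC(n,p)$ (since $\P(\GPC\in\cF)\leq \P(\cG\in\cF)/c_1$, Markov in $\cG$ yields $O_\mathrm{P}(1)$ cycle-vertices in $\GPC$); and the 2-core of $\GPC$ conditioned on $\taupc=\ell$ is the Poisson-configuration model with $\Lambda=\ell$. You instead carry out a direct first-moment computation in the configuration model, which I have checked line by line: the count $2^{k-1}(k-1)!$ of matchings of $2k$ half-edges forming a $k$-cycle is right, the asymptotic $\frac1{2k}(N_2/m)^k$ is right, the algebraic identity $N_2/m\to\mu$ via $\mathrm{e}^{-(\lambda-\mu)}=\mu/\lambda$ is right, and the resulting expectation $\mu/(2(1-\mu))$ is indeed $O(1)$. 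Moreover the exact formula gives $E_k=\frac1{2k}\prod_{j<k}\frac{2(N_2-j)}{2m-1-2j}$ with each factor at most $\frac{2N_2}{2m-1}$ (the ratio is decreasing in $j$ when $N_2<m$), so the geometric tail bound you invoke holds cleanly once $N_2$ and $m$ are in their typical $O(\sqrt n)$ windows, and Markov plus the union bound on the exceptional event finishes the proof.

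The comparison is worth recording. Your concern about the paper's route is legitimate as a matter of literal statement: $\taupc$ concentrates in a window of width $O(1/\sqrt n)$, whereas the corollary is phrased for all $\ell=\lambda-\mu+o(1)$, so the unconditional $O_\mathrm{P}(1)$ bound in $\GPC(n,p)$ transfers directly to the conditional law given $\taupc=\ell$ only when $\ell$ is in the typical window. (One can rescue the contiguity route for the wider window by running it with a shifted density $p'$ chosen so that $\lambda'-\mu'=\ell$, but the paper does not say this.) In practice the paper only ever uses the corollary with $\Lambda$ Gaussian of standard deviation $1/\sqrt n$, so the narrower window suffices for its downstream application, and the one-line proof is adequate there. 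What your direct computation buys is self-containment and robustness: it establishes the claim for the full stated range of $\ell$ without appeal to $\cG(n,p)$, to Theorem~\ref{thm-poisson-ER}, or to any identification of conditional laws, and it makes the constant $\mu/(2(1-\mu))$ explicit. The trade-off is length.

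One minor remark: the terms $k=1$ (self-loops) and $k=2$ (double edges) are not lower-order corrections but simply the first two terms of the same geometric series $\sum_k\mu^k/(2k)$; your formula already covers them, so the phrase ``analogous lower-order contributions'' slightly undersells that they are handled uniformly.
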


\subsection{Contiguity of Poisson-cloning and Poisson-configuration}

A key part of showing the contiguity result is the following lemma which controls the edge distribution in the Poisson-configuration model.

\begin{lemma}\label{lem-P(K|Lambda)-ratio}
Let $N_k$ denote the number of degree-$k$ vertices in the Poisson-configuration model,
and set $\Lambda_0 = \lambda - \mu$.
For any fixed $M > 0$ there exist some $c_1,c_2>0$ such that the following holds:
If $n_3,n_4,\ldots$ satisfy
\begin{align*}
\begin{array}{rl}
\Big|n\left(1-\mathrm{e}^{-\Lambda_0}(1+\Lambda_0 + \tfrac{\Lambda_0^2}2)\right) - \sum_{k\geq3}n_k \Big| &\leq M\sqrt{n}\,,\\
\Big|n\Lambda_0\left(1-\mathrm{e}^{-\Lambda_0}(1 + \Lambda_0)\right) - \sum_{k\geq3}k n_k \Big| &\leq M\sqrt{n}\,
\end{array}
\end{align*}
and $x$ satisfies $|x - \Lambda_0| \leq M/\sqrt{n}$ then
\[
c_1 \leq \frac{\P\left(N_k = n_k~\mbox{ for all }k\geq 3 \given \Lambda = x\right)}
{\P\left(N_k = n_k~\mbox{ for all }k\geq 3 \given \Lambda = \Lambda_0\right)} \leq c_2\,.
\]
\end{lemma}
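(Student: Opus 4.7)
The plan is to compute the ratio $R$ directly, relying only on the fact that conditional on $\Lambda = x$ the degrees $(D_u)_{u\in[n]}$ are i.i.d.\ $\Po(x)$ (the random multigraph step of Definition~\ref{def-poisson-conf-model} is irrelevant here, since the event concerns only the $N_k$'s). The marginal probability of $\{N_k = n_k$ for all $k \geq 3\}$ is obtained by summing over the unconstrained distribution of degrees $0,1,2$ among the remaining $m := n - S$ vertices (where $S := \sum_{k\geq 3} n_k$), which gives the closed-form multinomial
\[
\P\bigl(N_k = n_k\ \forall k \geq 3 \given \Lambda = x\bigr) = \frac{n!}{m!\,\prod_{k \geq 3}n_k!}\,q(x)^m \prod_{k\geq 3} p_k(x)^{n_k},
\]
with $p_k(x) = \mathrm{e}^{-x}x^k/k!$ and $q(x) = \mathrm{e}^{-x}(1+x+x^2/2) = \P(\Po(x)\leq 2)$. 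Taking the ratio at $\Lambda = x$ versus $\Lambda = \Lambda_0$, the combinatorial factor cancels exactly, leaving
\[
R = \left(\frac{q(x)}{q(\Lambda_0)}\right)^m \mathrm{e}^{-\delta S} \left(\frac{x}{\Lambda_0}\right)^{T}, \qquad \delta := x - \Lambda_0,\ \ T := \sum_{k\geq 3} k\, n_k.
\]

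I would then Taylor-expand $\log R$ to first order in $\delta$, noting that $|\delta| \leq M/\sqrt n$ and that $\Lambda_0 = \lambda - \mu$ is bounded away from $0$, so $\log q$ and $\log$ are smooth and implicit constants depend only on $\lambda,M$. Using the identity $q'(\Lambda_0) = -p_2(\Lambda_0)$ (a consequence of $\frac{d}{dx}\P(\Po(x) \leq j) = -p_j(x)$), the expansion gives
\[
\log R = -\delta\!\left[\,m\,\frac{p_2(\Lambda_0)}{q(\Lambda_0)} + S - \frac{T}{\Lambda_0}\right] + O\bigl((m+T)\,\delta^2\bigr).
\]
The quadratic remainder is automatically $O(1)$, since $m + T = O(n)$ while $\delta^2 = O(1/n)$.

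The only nontrivial point — and the main obstacle — is that the bracketed first-order coefficient must be $O(\sqrt n)$ rather than the a priori $O(n)$. The two hypotheses translate, via $m = n - S$, into
\[
m = n\,q(\Lambda_0) + O(\sqrt n), \quad S = n\bigl(1-q(\Lambda_0)\bigr) + O(\sqrt n), \quad T = n\Lambda_0\,\P(\Po(\Lambda_0) \geq 2) + O(\sqrt n).
\]
Substituting, the $O(n)$ part of the bracket reduces to
\[
n\,p_2(\Lambda_0) + n\bigl(1 - q(\Lambda_0)\bigr) - n\,\P(\Po(\Lambda_0) \geq 2),
\]
which vanishes identically since $\P(\Po(\Lambda_0)\geq 2) = p_2(\Lambda_0) + \P(\Po(\Lambda_0)\geq 3) = p_2(\Lambda_0) + (1 - q(\Lambda_0))$. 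Hence the bracket is genuinely $O(\sqrt n)$, and multiplying by $\delta = O(1/\sqrt n)$ yields a first-order contribution of $O(1)$. Combined with the quadratic remainder this gives $|\log R| \leq C(\lambda,M)$ uniformly over all admissible $(n_k)$ and $x$, which is the claim with $c_1 = \mathrm{e}^{-C}$ and $c_2 = \mathrm{e}^{C}$. Conceptually, the cancellation reflects the fact that the two hypotheses pin $(n_k)$ to its $\Po(\Lambda_0)$-expected summary statistics, so that an $O(1/\sqrt n)$ perturbation of $\Lambda_0$ affects the probability of the event only through the two subleading channels $\{=2\}$ and $\{\geq 3\}$, which balance exactly.
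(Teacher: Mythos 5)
Your proof is correct and follows essentially the same route as the paper: compute the explicit multinomial form of $\P(N_k=n_k\ \forall k\geq 3\mid\Lambda=x)$, take the logarithm of the ratio, Taylor-expand to first order in $\delta = x-\Lambda_0$ (with the quadratic remainder trivially $O(1)$ since $\delta^2=O(1/n)$), and use the two hypotheses on $\sum n_k$ and $\sum k n_k$ to show the first-order coefficient is only $O(\sqrt n)$. The paper organizes the cancellation as $n(\Lambda_0-x)+n(x-\Lambda_0)=0$ after substituting the constrained values, whereas you phrase it via the identity $q'=-p_2$ and $\P(\Po\geq2)=p_2+(1-q)$, but these are the same computation.
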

\begin{proof}
Throughout the proof of the lemma, the implicit constants in the $O(\cdot)$ notation depend on $M$.
Write $m = \sum_{k \geq 3}n_k$ and $r=\sum_{k\geq 3}k n_k$, and let
$A=A(n_3,n_4,\ldots)$ denote the event $\{N_k = n_k\mbox{ for all }k\geq 3\}$.
Setting
\[ \Xi= \Xi(x,\Lambda_0) \deq \frac{\P\left(A \given \Lambda = x\right)}
{\P\left(A \given \Lambda = \Lambda_0\right)} \,,\]
we are interested in uniform bounds for $\Xi$ from above and below. Let
 \[p_k(x) = \P(\Po(x)=k) = \mathrm{e}^{-x}x^k/k!~,~\mbox{ and }p^-_k(x) = \P(\Po(x) \leq k)\,.\]
 and observe that
\begin{align*}
\Xi &=
\bigg(\frac{p^-_2(x)}{p^-_2(\Lambda_0)}\bigg)^{n-m} \prod_k \bigg(\frac{p_k(x)}{p_k(\Lambda_0)}\bigg)^{n_k} = \mathrm{e}^{-n(x-\Lambda_0)}\bigg(\frac{1+x+\frac{x^2}2}{1+\Lambda_0+\frac{\Lambda_0^2}2}\bigg)^{n-m} \Big(\frac{x}{\Lambda_0}\Big)^r\,,
\end{align*}
and so
\begin{align*}
\log \Xi &=
n(\Lambda_0-x) + (n-m)\log\bigg(\frac{1+x+\tfrac{x^2}2}{1+\Lambda_0+\frac{\Lambda_0^2}2}\bigg)+ r\log\frac{x}{\Lambda_0}\,.
\end{align*}
Using Taylor's expansion and recalling that $x -\Lambda_0 = O(1/\sqrt{n}) $,
\begin{align*}
\log\bigg(\frac{1+x+\tfrac{x^2}2}{1+\Lambda_0+\frac{\Lambda_0^2}2}\bigg)
 &= \frac{1+\Lambda_0}{1+\Lambda_0+\frac{\Lambda_0^2}2}(x-\Lambda_0) +O(1/n)\,,
\end{align*}
and we
deduce that
\begin{align*}
\log\Xi &=
n(\Lambda_0-x) +  (n-m) \frac{1+\Lambda_0}{1+\Lambda_0+\frac{\Lambda_0^2}2}(x-\Lambda_0)
+ r \frac{x - \Lambda_0}{\Lambda_0} + O(1)\,.
\end{align*}
Our assumptions on $m,r$ now yield that
\begin{align*}
\log\Xi &=
n(\Lambda_0-x) +  n \mathrm{e}^{-\Lambda_0} (1+\Lambda_0)(x-\Lambda_0)\\
&+ n \left(1-\mathrm{e}^{-\Lambda_0}(1+\Lambda_0)\right)(x-\Lambda_0) + O(1) = O(1)\,,
\end{align*}
completing the proof.
\end{proof}

Using the above estimate we are now able to conclude the main result of this section, which reduces the $2$-core of Poisson-cloning to the graph generated by the Poisson-configuration model.

\begin{proof}[\textbf{\emph{Proof of Theorem~\ref{thm-Lambda-contiguity}}}]
Recall that $H$ is the random graph generated by the Poisson-configuration model, and $\TC[G]$ is the 2-core of Poisson-cloning.

Fix $\delta > 0$, and with the statement of Theorem~\ref{thm-Lambda-C-upper} in mind,
as well as the definition of $\Lambda$ as Gaussian with parameters $\mathcal{N}\left(\lambda - \mu, 1/n\right)$,
set
\[B = (\lambda-\mu - M/\sqrt{ n},~ \lambda-\mu + M/\sqrt{n})\,,\] where $M=M(\delta)$ is a sufficiently large constant such that
\[ \P(\taupc \in B\,,\,\Lambda \in B) \geq 1 - \delta\,.\]

Following the notation of Lemma~\ref{lem-P(K|Lambda)-ratio}, let $N_k$ be the number of degree-$k$ vertices in $H$.
Conditioned on $\Lambda = x$ we have that $\sum_{k\geq 3} N_k = \sum_{u} \one_{\{D_u \geq 3\}}$ where the $D_u$ are i.i.d.\ $\Po(x)$, hence the Central Limit Theorem implies that $\sum_{k\geq 3} N_k$ is concentrated around $n\big(1-\mathrm{e}^{-x}(1+x + \tfrac{x^2}2)\big)$ with a window of $O(\sqrt{n})$. A similar statement holds for $\sum_{k\geq 3} k N_k = \sum_u D_u \one_{\{D_u \geq 3\}}$.
Since $\Lambda$ is Gaussian with variance $1/n$, removing the conditioning on $\Lambda$ introduces $O(1/\sqrt{n})$ fluctuations which translate to $O(\sqrt{n})$ fluctuations in the above mentioned random variables.
Altogether, if $M(\delta)>0$ is large enough then each of the variables $\sum_{k\geq 3}N_k$ and $\sum_{k\geq 3}k N_k$ is within $M\sqrt{n}$ of its mean, except with probability $\delta$.

Let $\Gamma$ denote the set of all sequences $\{n_k : k\geq 3\}$ which satisfy the assumptions of Lemma~\ref{lem-P(K|Lambda)-ratio}. By the above discussion, we can restrict our attention to degree-sequences in $\Gamma$ at a cost of events whose probability is at most $\delta$.
By the conclusion of that lemma, the probability to have $\{N_k : k \geq 3\} \in \Gamma$ given $\Lambda=x$ for some $x\in B$ is uniformly bounded below and above by the corresponding probability given $\Lambda = \lambda - \mu$.
In particular, for any $x\in B$ the event that $H$ is in $\mathcal{A} \cap \big\{ \{N_k:k\geq 3\}\in \Gamma\big\}$ given $\Lambda = x$ has up to constants (that depend on $\delta$) the same probability for any $x$ in this region.

Further fix $\delta'>0$ and define
\[ D \deq \left\{x \in B: \P\left(H\in \mathcal{A}\,,\,\{N_k(H):k\geq 3\}\in \Gamma
\mid \Lambda = x\right) \geq \delta'\right\}\,.\]
We claim that for any large enough $n$ we have $D = \emptyset$. Indeed, this follows immediately from the discussion above, since the existence of some $x\in D$ would imply that in particular
\[ \P\left(H\in \mathcal{A}\,,\,\{N_k(H):k\geq 3\}\in \Gamma
\mid \Lambda \in B\right) \geq \delta' / c(\delta)\,,\]
which, since $\P(\Lambda \in B) \geq 1-\delta$, contradicts the fact that $\P(H\in \mathcal{A}) = o(1)$.

With $D = \emptyset$, reiterating the argument on the uniformity for on $x \in B$
of the above probability given $\Lambda = x$ we deduce that
for any $x \in B$,
\[ \P\left(H\in \mathcal{A}\,,\,\{N_k(H):k\geq 3\}\in \Gamma
\mid \Lambda = x\right) \leq \delta'\,.\]
To complete the proof, observe that the Poisson-configuration model given $\Lambda=x$ is equivalent to the Poisson-cloning model given $\taupc=x$. Therefore, combining the arguments thus far we arrive at the following estimate, valid for any $x\in B$:
\[ \P\big(\TC[G]\in \mathcal{A} \mid \taupc = x\big) =
\P\left(H \in \mathcal{A} \mid \Lambda = x\right) \leq \delta' + \delta\,,\]
where the $\delta$-term accounted for the probability of $\{N_k(H):k\geq 3\}\notin \Gamma$.
The proof is concluded by recalling that $\taupc \in B$ except with probability $\delta$, and letting $\delta'\to 0$ followed by $\delta\to0$.
 \end{proof}

\section{Constructing the 2-core of the random graph}\label{sec:poisson-geometric}

So far we have reduced the 2-core of Poisson-cloning to the simpler Poisson-configuration model as given in Definition~\ref{def-poisson-conf-model}.
In this section we will reduce the Poisson-configuration model to the Poisson-geometric model, defined as follows.
Recall that $\mu<1$ is the conjugate of $\lambda>1$ as defined in Theorem~\ref{mainthm-struct}.
\begin{definition}[Poisson-geometric model for $n$ and $p=\lambda/n$]\label{def-poisson-geo-model}\mbox{}
\begin{enumerate}[(1)]
\item Let $\Lambda\sim \mathcal{N}\left(\lambda - \mu, \frac1{n}\right)$ and assign an independent $\Po(\Lambda)$ variable $D_u$ to each vertex $u$. Let $N_k = \#\{u : D_u = k\}$ and $N = \sum_{k\geq 3}N_k$.
\item Construct a random multigraph $\K$ on $N$ vertices, uniformly chosen over all graphs with $N_k$ degree-$k$ vertices for $k\geq 3$ (if $\sum_{k\geq 3}k N_k$ is odd, choose a vertex $u$ with $D_u=k \geq 3$ with probability proportional to $k$, and give it $k-1$ half-edges and a self-loop).
\item Replace the edges of $\K$ by paths of length i.i.d.\ $\Geom(1-\mu)$.
\end{enumerate}
\end{definition}

\begin{theorem}\label{thm-Poisson-contiguity}
Let $H$ be generated by the Poisson-configuration model w.r.t.\ $n$ and
$p=\lambda/n$ where $\lambda>1$ is fixed.
Let $\tilde{H}$ be generated by the Poisson-geometric model corresponding to $n,p$. Then for any set of graphs $\mathcal{A}$ such that $\P(\tilde{H} \in
\mathcal{A}) \to 0$, we have $\P(H \in \mathcal{A}) \to 0$.
\end{theorem}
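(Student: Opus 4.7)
My plan is to couple $H$ and $\tilde H$ on a common probability space so they share the macroscopic data $X' = (\Lambda, \{N_k\}_{k\geq 2})$ and the kernel $\K$ (the uniform multigraph on $N = \sum_{k\geq 3}N_k$ vertices with degrees $\{N_k\}_{k\geq 3}$, whose law agrees in the two models). What remains is to compare the joint law of the $E = |E(\K)|$ path-lengths $(L_e)_e$, which I will do via an explicit Radon--Nikodym calculation controlled by the local CLT.

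The combinatorial heart of the argument is the following. In the Poisson-configuration model, given $(X', \K)$ and the count $N_2$ of degree-2 vertices, the path-length vector $(L_e)_e$ is uniformly distributed over compositions of $N_2 + E$ into $E$ positive parts. Indeed, a direct half-edge count shows that for every such composition $(\ell_e)_e$ with $m_e := \ell_e - 1$, the number of perfect matchings of the half-edges realizing $(\K, (\ell_e))$ is the product of a factor $\phi(\K)$ that depends only on $\K$ (the number of matchings of kernel half-edges compatible with $\K$) and the count
\[
\binom{N_2}{m_1, \ldots, m_E} \cdot \prod_e m_e! \cdot 2^{N_2} \;=\; N_2! \cdot 2^{N_2},
\]
corresponding to choosing which degree-2 vertices lie on each path, ordering them along the path, and assigning the two half-edges of each to its two neighbors. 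This count is independent of the composition $(\ell_e)_e$. The $O_{\mathrm P}(1)$ degree-2 vertices on disjoint 2-core cycles (Corollary~\ref{cor-cycles}) are absorbed as lower-order error. Symmetrically, $E$ i.i.d.\ $\Geom(1-\mu)$ variables conditioned to sum to $N_2 + E$ are uniform on the same set of compositions. Hence the law of $H$ given $(X', \K)$ equals the law of $\tilde H$ given $(X', \K)$ further conditioned on $\sum_e(L_e - 1) = N_2$, and the induced Radon--Nikodym derivative depends on the path-lengths only through $S := \sum_e(L_e - 1)$:
\[
\rho(S) \;=\; \frac{\P(N_2 = S \mid X')}{\P\!\left(G_1 + \cdots + G_E = S \mid X'\right)},\qquad G_e \stackrel{\mathrm{iid}}{\sim} \Geom(1-\mu) - 1.
\]

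Given $X'$, the numerator is the mass at $S$ of a $\Bin(n - N,\, p_2(\Lambda)/(p_0 + p_1 + p_2)(\Lambda))$ variable, with $p_k(x) = \mathrm{e}^{-x}x^k/k!$; the denominator is the mass at $S$ of a sum of $E$ i.i.d.\ nonnegative geometrics of parameter $1-\mu$. The conjugate relation $\mu \mathrm{e}^{-\mu} = \lambda \mathrm{e}^{-\lambda}$, equivalently $\lambda_0 \mathrm{e}^{-\lambda_0} = \mu(1 - \mathrm{e}^{-\lambda_0})$ with $\lambda_0 = \lambda - \mu$, ensures these distributions share the common mean $n \mathrm{e}^{-\lambda_0} \lambda_0^2 / 2$ at the typical $\Lambda = \lambda_0$, up to $O(\sqrt n)$ on typical $X'$. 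A short calculation with the same conjugacy gives $V_G / V_C = (1 + \lambda_0 + \lambda_0^2/2) / ((1-\mu)(1 + \lambda_0)) \geq 1$ for the asymptotic variances. Theorem~\ref{thm-local} then yields the Gaussian representation of $\rho$ with the correct sign in the exponent, so $\rho$ is uniformly bounded by $\sqrt{V_G/V_C} = O(1)$ on a set of probability $1 - o(1)$ under the law of $\tilde H$. Integrating against any event $\mathcal A$ gives $\P(H \in \mathcal A) \leq C\,\P(\tilde H \in \mathcal A) + o(1)$, establishing contiguity.

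The main technical step will be turning the pointwise Gaussian approximation of Theorem~\ref{thm-local} into uniform control of $\rho$ across the $O(\sqrt n)$-window, paired with exponential tail bounds that kill the contribution of extreme $S$. In the analogous step of~\cite{DKLP1} this required delicate $\epsilon$-dependent Pittel--Wormald estimates; here, with $\lambda > 1$ fixed and all constants $O(1)$, the standard LLT (Theorem~\ref{thm-local}) together with elementary Poisson/geometric concentration suffices, matching the simplification pointed to in the introduction.
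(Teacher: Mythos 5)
Your proposal is correct and follows essentially the same route as the paper's proof: both condition on the kernel $\K$, observe that the conditional law of the 2-core given $\K$ and the edge-excess $S=\sum_e(L_e-1)$ (equivalently $|E(H)|-|E(\K)|$) is identical in the two models, so that the Radon--Nikodym derivative depends only on $S$, and then control this one-dimensional ratio via the local CLT (Theorem~\ref{thm-local}), with Corollary~\ref{cor-cycles} absorbing the $O_{\mathrm P}(1)$ disjoint-cycle discrepancy. The only cosmetic difference is that the paper avoids the explicit variance comparison $V_G/V_C\geq 1$ by using a one-sided upper LLT bound on $\P(|E(H)|=s\mid\K)$ (Lemma~\ref{lem-poisson-conf-upper}) and a one-sided lower LLT bound on $\P(|E(\tilde H)|=s\mid\K)$ (Lemma~\ref{lem-poisson-geo-lower}) over the $O(\sqrt n)$-window, which is slightly cleaner and does not require the variance ordering you compute; your explicit half-edge-matching count ($N_2!\,2^{N_2}$, independent of the composition) is a more hands-on justification of the uniformity over $\mathcal{D}_{s,\K}$ that the paper states directly.
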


Both models clearly share the same kernel and only differ in the way this kernel is then expanded to
form the entire graph (replacing edges by paths). To prove the above theorem we need to estimate the distribution of
the total number of edges in each model and show they are in fact contiguous.
A first step towards this goal is to control the number of
edges in each model. Fix some large $M
> 0$, and let $\cB_M$ denote the following set of ``good'' kernels:
\begin{align}\label{eq-good-kernels}
\cB_M \deq \left\{\K :\begin{array}{c}
\Big||\K| - n\left(1-\mathrm{e}^{-\Lambda_0}(1+\Lambda_0 + \tfrac{\Lambda_0^2}2)\right) \Big| \leq M\sqrt{ n}\\
\Big||E(\K)| - \tfrac12 n\Lambda_0\left(1-\mathrm{e}^{-\Lambda_0}(1 + \Lambda_0)\right) \Big| \leq M\sqrt{ n}
\end{array}\right\}\,,
\end{align}
where $\Lambda_0 = \lambda - \mu$.
The next lemma estimates the number of edges in the Poisson configuration
model given that the kernel belongs to $\cB_M$ as above.
\begin{lemma}\label{lem-poisson-conf-upper}
Define $M>0$, $I_M, \mathcal{B}_M$ as above and set $\Lambda_0=\lambda-\mu$. Let $H$ be generated by the
Poisson-configuration model. There exists some constant $c(M) > 0$ so that
for any $\K \in \mathcal{B}_M$ and $s$ with $\big|s - \frac{n}2\left(\Lambda_0 - \mathrm{e}^{-\Lambda_0}\Lambda_0 \right)\big| \leq M\sqrt{ n}$,
\[\P\left(|E(H)| = s \,,\, \Lambda \in I_M \given \ker(H) = \K\right) \leq \frac{c}{\sqrt{ n}}\,.\]
\end{lemma}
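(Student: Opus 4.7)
The strategy is to reduce the lemma to a local limit statement about $N_2$, the number of degree-$2$ vertices in $H$, via the identity $|E(H)| = |E(\K)| + N_2$. This identity holds because the kernel is produced from the $2$-core by first deleting disjoint cycles of degree-$2$ vertices (which removes equal numbers of vertices and edges) and then contracting each internal degree-$2$ vertex (each contraction subtracting exactly one from the edge count). Consequently, on the event $\ker(H) = \K$ the quantity $|E(\K)|$ is fixed, and the event $\{|E(H)| = s\}$ coincides with $\{N_2 = n_2\}$ where $n_2 := s - |E(\K)|$. It therefore suffices to prove
\[
\P\bigl(N_2 = n_2,\, \Lambda \in I_M \bigm| \ker(H) = \K\bigr) \le c/\sqrt{n}\,.
\]

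First I would condition on $\Lambda$: since $\int_{I_M} f_{\Lambda\mid\ker(H)=\K}(x)\,dx \le 1$, it is enough to establish the uniform bound $\sup_{x\in I_M}\P(N_2 = n_2 \mid \ker(H)=\K,\,\Lambda=x) \le c/\sqrt{n}$. In the two-step generation of the Poisson-configuration model (first the i.i.d.\ $\Po(\Lambda)$ degree sequence, then the uniform configuration-model multigraph on those degrees), the distribution of $N_2$ given only $\Lambda = x$ and $\{N_k = n_k(\K) : k\ge 3\}$ is the binomial
\[
\Bin\!\bigl(n-N,\; q(x)\bigr)\,, \qquad q(x) = \frac{p_2(x)}{p_{\le 2}(x)}\,,\quad p_k(x) = \mathrm{e}^{-x}x^k/k!\,.
\]
For $\K\in\cB_M$ one has $n - N = \Theta(n)$, and $q(x)$ is bounded away from $\{0,1\}$ for $x\in I_M$; the classical local limit theorem (Theorem~\ref{thm-local}) therefore yields an $O(1/\sqrt n)$ bound on the individual probabilities of this binomial.

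The remaining subtlety, and the main obstacle, is accounting for the \emph{additional} conditioning on the full kernel $\ker(H)=\K$, which reweights the above binomial by a factor $g(n_2;\K) := \P(\ker(H)=\K \mid N_2 = n_2,\,\text{deg seq})$ coming from the configuration-model count of multigraphs that subdivide $\K$ into paths using exactly $n_2$ degree-$2$ vertices (modulo at most $O(1)$ disjoint degree-$2$ cycles, controlled by Corollary~\ref{cor-cycles}). A direct count together with Stirling's formula shows that the log-ratio $\log[g(n_2+1;\K)/g(n_2;\K)]$ is smooth in $n_2$ with derivative of order $1/n$ near its mode, so multiplying the binomial by $g(n_2;\K)$ produces a (still log-concave) distribution whose mode shifts by only $O(1)$ and whose spread remains $\Theta(\sqrt{n})$; its individual point probabilities therefore stay $O(1/\sqrt{n})$. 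Equivalently, one may invoke the joint Pittel-Wormald local limit theorem (Theorem~\ref{thm-PW-local-limit}) applied to $(|\ker(H)|, |E(H)|)$ in the configuration-model-with-given-degrees ensemble, which delivers an $O(1/\sqrt n)$ density bound on $|E(H)|$ directly, even after conditioning on the kernel. Integrating over $x \in I_M$ then completes the proof.
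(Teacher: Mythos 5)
Your overall approach is the same as the paper's: reduce $|E(H)|$ to $N_2$ via $|E(H)|=|E(\K)|+N_2$, condition on $\Lambda=x$, identify a binomial distribution for $N_2$, and invoke Theorem~\ref{thm-local} for the $O(1/\sqrt n)$ point bound, then integrate over $x\in I_M$. The paper does exactly this and rather briskly: given $\Lambda=x$ and $\ker(H)=\K$ it asserts directly that $|E(H)|\sim r+\Bin\bigl(n-m,\,\tfrac{x^2/2}{1+x+x^2/2}\bigr)$, checks that the mean matches $\tfrac n2\left(\Lambda_0-\mathrm{e}^{-\Lambda_0}\Lambda_0\right)+O(\sqrt n)$, and applies the local CLT.

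Where you differ is in flagging, as the ``main obstacle,'' that conditioning on the multigraph $\ker(H)=\K$ is strictly stronger than conditioning on the kernel degree sequence $\{N_k\}_{k\ge3}$, and then proposing to control a reweighting factor $g(n_2;\K)$ by a Stirling smoothness argument. This is a legitimate observation --- the paper does elide this step --- but your resolution is both vaguer than necessary and misses that the issue collapses outright. In the configuration model, conditioned on the degree sequence (hence on $N_2$ and $\{N_k\}_{k\ge3}$), the induced matching on the kernel half-edges is a \emph{uniform} perfect matching on those half-edges, independently of how the degree-$2$ vertices are interleaved along the paths or spill into disjoint cycles. Consequently $\P(\ker(H)=\K \mid N_2=n_2,\{N_k\}_{k\ge3})$ does not depend on $n_2$ (up to the $O(1)$ disjoint-cycle correction already handled by Corollary~\ref{cor-cycles}), so $g(n_2;\K)$ is essentially constant, not merely ``smooth with derivative $O(1/n)$,'' and the binomial claim holds exactly rather than approximately. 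Your Stirling argument, as written, is a gesture rather than a proof; had the constancy failed, ``smoothness'' alone would not suffice to preserve the $O(1/\sqrt n)$ point bound without further quantification. Finally, the fallback you suggest --- invoking Theorem~\ref{thm-PW-local-limit} for a configuration-model ensemble conditioned on the kernel --- is not what that theorem gives: it is stated for $\cG(n,p)$ and for the triple $(|H|,\,|\GC|-|H|,\,|E(H)|-|H|)$ without kernel conditioning, so it cannot be applied ``directly'' as claimed.
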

\begin{proof}
Let $x \in I_M$ and $\K \in \cB_M$, and write $m = |\K|$ and $r=|E(\K)|$ for the number of vertices and the edges in the kernel respectively. We will first estimate $\P(|E(H)| = s \given \Lambda = x\,,\, \ker(H) = \K)$, and the required inequality will then
readily follow from an integration over $x \in I_M$.

Note that, given $\Lambda = x$ and $\ker(H)=\K$, the number of edges in $H$ is the $r$ edges of $\K$ plus an added edge for each degree $2$ variable out of the $n-m$ variables (i.i.d.\ $\Po(x)$) that have $\{ u : D_u \leq 2\}$. That is, in this case
\[
|E(H)| \sim r+\Bin\left( n-m, \frac{x^2/2}{1 + x + x^2/2}\right)\,.
\]
It is straightforward to verify that for $x \in I_M$ and $ \K \in
\cB_M$, we have \begin{align*}\E \big(|E(H)| \;\big|\; \Lambda = x, \ker(H) =
\K\big)& = r + (n-m)\frac{x^2/2}{1 + x + x^2/2}\\
& = \frac{n}2\left(\Lambda_0 -
\mathrm{e}^{-\Lambda_0}\Lambda_0\right) + O(\sqrt{n}) = s+
O(\sqrt{n})\,.\end{align*}
The required estimate now follows
immediately from Theorem~\ref{thm-local}.
\end{proof}
We now turn to analyze the number of edges in Poisson-Geometric
model.
\begin{lemma}\label{lem-poisson-geo-lower}
Let $M>0$ and $\mathcal{B}_M$ be as in \eqref{eq-good-kernels}. Let $\tilde{H}$ be generated by the Poisson-geometric model. There exists some constant $c = c(M) > 0$ so that
for any $\K \in \mathcal{B}_M$ and $s$ with $\big|s - \frac{n}2(\lambda-\mu)\left(1-\frac{\mu}{\lambda}\right)\big| \leq M\sqrt{ n}$,
\[\P\left(|E(\tilde{H})| = s \given \ker(\tilde{H}) = \K\right) \geq \frac{c}{\sqrt{n}}\,.\]
\end{lemma}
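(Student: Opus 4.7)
The plan is to observe that, conditionally on $\ker(\tilde H) = \K$ with $r = |E(\K)|$, the number of edges of $\tilde H$ is a sum of $r$ i.i.d.\ geometric variables, and then invoke the local limit theorem (Theorem~\ref{thm-local}) exactly as in the proof of Lemma~\ref{lem-poisson-conf-upper}. Indeed, Step~(3) of Definition~\ref{def-poisson-geo-model} replaces each of the $r$ edges of $\K$ by an independent path of length $L_i \sim \Geom(1-\mu)$, so
\[
|E(\tilde H)| \;=\; \sum_{i=1}^{r} L_i \quad \text{conditionally on } \ker(\tilde H)=\K.
\]
Since $L_i$ takes every value in $\{1,2,\ldots\}$ with positive probability, has mean $\nu = 1/(1-\mu)$ and variance $\sigma^2 = \mu/(1-\mu)^2$, Theorem~\ref{thm-local} (applied e.g.\ to $L_i - 1$) yields
\[
\P\!\left(\sum_{i=1}^{r} L_i = s \,\Big|\, \ker(\tilde H)=\K\right) \;=\; \frac{1+o(1)}{\sigma\sqrt{2\pi r}}\, \exp\!\left(-\frac{(s-r\nu)^2}{2 r \sigma^2}\right),
\]
uniformly for $s$ with $s - r\nu = O(\sqrt r)$. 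Since $r = \Theta(n)$ for $\K\in\cB_M$, the stated lower bound of order $1/\sqrt n$ will follow provided we can check that the target $s$ lies within $O(\sqrt n)$ of $r\nu = r/(1-\mu)$.

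The only real calculation is to verify this matching of the means, and it is where the conjugacy relation $\mu \mathrm{e}^{-\mu} = \lambda \mathrm{e}^{-\lambda}$ enters. Setting $\Lambda_0 = \lambda - \mu$ one has $\mathrm{e}^{-\Lambda_0} = \mu/\lambda$, whence
\[
\Lambda_0\!\left(1 - \mathrm{e}^{-\Lambda_0}(1+\Lambda_0)\right) \;=\; (\lambda-\mu)\cdot\frac{(\lambda-\mu)(1-\mu)}{\lambda} \;=\; \frac{(\lambda-\mu)^2(1-\mu)}{\lambda}.
\]
For $\K \in \cB_M$ this gives $r = \tfrac{n}{2}(\lambda-\mu)^2(1-\mu)/\lambda + O(\sqrt n)$, and therefore
\[
\frac{r}{1-\mu} \;=\; \frac{n}{2}\,\frac{(\lambda-\mu)^2}{\lambda} + O(\sqrt n) \;=\; \frac{n}{2}(\lambda-\mu)\!\left(1-\tfrac{\mu}{\lambda}\right) + O(\sqrt n),
\]
which is precisely the centering in the hypothesis on $s$. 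Hence $|s - r\nu| \leq (M + O(1))\sqrt n = O(\sqrt r)$, the local limit theorem applies in its uniform regime, and the exponential factor is bounded below by a positive constant depending only on $M$. This delivers the desired constant $c = c(M) > 0$.

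The only genuinely delicate point is to ensure that the hypothesis ``$\P(X=k) > 0$ for all $k$'' in Theorem~\ref{thm-local} is respected; this is handled by the harmless shift to $L_i - 1 \in \{0,1,2,\ldots\}$, after which the standard lattice local CLT applies verbatim. All remaining steps are bookkeeping analogous to Lemma~\ref{lem-poisson-conf-upper}.
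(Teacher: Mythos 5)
Your argument is correct and matches the paper's proof essentially line for line: condition on the kernel, recognize $|E(\tilde H)|$ as a sum of $r=|E(\K)|$ i.i.d.\ $\Geom(1-\mu)$ variables, verify via the conjugacy relation that $s = r/(1-\mu) + O(\sqrt n)$ for $\K\in\cB_M$, and apply the local limit theorem (Theorem~\ref{thm-local}). You carry out the mean-matching algebra (which the paper leaves implicit) and flag the harmless shift $L_i\mapsto L_i-1$, but these are cosmetic refinements of the same approach.
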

\begin{proof}
By definition, given that $\ker(\tilde{H})=\K$, the variable $|E(\tilde{H})|$ is the sum of $|E(\K)|$ i.i.d.\ geometric variables
with mean $1/(1-\mu)$.

Denote by $r$ the number of edges in the kernel $\K$, and let $s$ be a candidate for the number of edges in the expanded 2-core $\tilde{H}$. As stated in the lemma (recall definition \eqref{eq-good-kernels}), we are interested in the following range for $r,s$:
\begin{align*}
r &= \frac{n}{2}(\lambda - \mu) (1 - \tfrac{\mu}{\lambda})(1 - \mu) +
c_1 \sqrt{ n}, &\quad (|c_1| \leq M)\,,\\
s & = \frac{n}{2}(\lambda - \mu) (1 - \tfrac{\mu}{\lambda}) + c_2 \sqrt{n}, &\quad (|c_2| \leq M)\,.
\end{align*}
It is clear that given $\ker(\tilde{H}) = \K$ with $|E(\K)| = r$,
the number of edges in $\tilde{H}$ is distributed as $\sum_{i=1}^r
X_i$, where the $X_i$'s are independent geometric random variables with
mean $\frac{1}{1-\mu}$, i.e., $\P(X_i=k) = \mu^{k-1}(1-\mu)$ for $k=1,2,\ldots$. Since $s = \frac{r}{1-\mu} + O(\sqrt{r})$, the
desired estimate now follows from
Theorem~\ref{thm-local}.
\end{proof}

We now establish the main result of this section, Theorem~\ref{thm-Poisson-contiguity}, reducing the Poisson-configuration model to the Poisson-geometric model.
\begin{proof}[\emph{\textbf{Proof of Theorem~\ref{thm-Poisson-contiguity}}}]
For some constant $M > 0$ to be specified later, define the event
\begin{align*}
A_M \deq \left\{~\Lambda \in I_M,~ \ker(H) \in \cB_M,~ \left| |E(H)| - \tfrac{n}{2}(\lambda - \mu) (1 - \tfrac{\mu}{\lambda})\right| \leq M \sqrt{n}~
   \right\}\,.
\end{align*}
Fix $\delta > 0$. We claim that for a sufficiently large
$M = M(\delta)$ we have $\P(A_M) \geq 1- \delta$.
To see this, note the following:
\begin{enumerate}[\indent 1.]
  \item In the Poisson-configuration model, $\Lambda \sim \mathcal{N}\left(\lambda-\mu , \frac{1}{ n}\right)$, and $I_M$ includes at least $M$ standard deviations about its mean.
      \item Each of the variables
      $|\K |$ and $E(\K)$ is a sum of i.i.d.\ random variables with variance $O(n)$ and mean as specified in the definition of $\cB_M$, hence their concentration follows from the CLT.
      \item Finally, $E(H)$ is again a sum of i.i.d.\ variables and has variance $O(n)$, only here we must subtract the vertices that comprise disjoint cycles. By Corollary~\ref{cor-cycles}, the number of such vertices is $O_\mathrm{P}(1)$, which is negligible  compared to the $O(\sqrt{n})$ standard deviation of $E(H)$.
\end{enumerate}
Given an integer $s$ and a kernel $\K$, let $\mathcal{D}_{s,\K}$ denote every possible $2$-core with $s$ edges and kernel $\K$.
Crucially, the distribution of the Poisson-configuration model given $E(H)=s$ and $\ker(H)=\K$ is uniform over $\mathcal{D}_{s,\K}$, and so is the Poisson-geometric model given $E(\tilde{H})=s$ and $\ker(\tilde{H})=\K$.
Therefore, for any graph $D \in \mathcal{D}_{s,\K}$,
\begin{align*}
\frac{\P(H = D\given \ker(H)=\K )}{\P(\tilde{H} = D\given \ker(\tilde{H})=\K )}
= \frac{\P (|E(H)| = s \given \ker(H)=\K )}{\P (|E(\tilde{H})| = s \given \ker(\tilde{H})=\K)}\,.
\end{align*}
Combining Lemmas~\ref{lem-poisson-conf-upper} and~\ref{lem-poisson-geo-lower} we get that for some $c=c(M) > 0$,
\[
\frac{\P (|E(H)| = s~,~A_M \given \ker(H)=\K )}{\P (|E(\tilde{H})| = s \given \ker(\tilde{H})=\K)}
\leq c\,.\]
Recalling that $\P(A_M) \geq 1 -\delta$ and letting $\delta \to 0$, we deduce that for any family of graphs $\mathcal{A}$,
if $\P(\tilde{H} \in \mathcal{A}) \to 0$ then also $\P(H \in \mathcal{A}) \to 0$.
\end{proof}

\section{Constructing the giant component}\label{sec:structure-gc}

We begin by analyzing the trees that are attached to $\TC[G]$, the 2-core of $G$.
As before, $\mu < 1$ is the conjugate of $\lambda>1 $ as defined in Theorem~\ref{mainthm-struct}.

%

\begin{proof}[\emph{\textbf{Proof of Theorem~\ref{mainthm-struct}}}]
Write PGW($\mu$)-tree for a Poisson($\mu$)-Galton-Watson tree, for brevity.
Let $\hGC$ denote the graph obtained as follows:
\begin{itemize}
  \item Let $H$ be a copy of $\TC$ (the 2-core of the giant component of $G$).
  \item For each $v \in H$, attach an independent PGW($\mu$) tree rooted at $v$.
\end{itemize}
By this definition, $\GC$ and $\hGC$ share the same 2-core $H$. For simplicity, we will refer directly to $H$ as the 2-core of the model, whenever the context of either $\GC$ or $\hGC$ is clear. We first establish the contiguity of $\GC$ and $\hGC$.

Define the trees decorating the 2-core of $\GC$ as follows:
\[T_u \deq \{v\in \GC: v \mbox{ is connected to } u \mbox{ in } \GC \setminus H\}~\mbox{ for $u\in H$}\,.\]
Clearly, each $T_u$ is a tree as it is connected and has no cycles (its vertices were not included in the 2-core).
We go from $H$ to $\GC$ by attaching the tree $T_u$ to each vertex $u\in H$ (while identifying the root of $T_u$ with $u$).
Similarly, let $\{\tilde{T}_u\}_{u\in H}$ be the analogous trees in $\hGC$.

We next introduce notations for the labeled and unlabeled trees as well as their distributions. For $t \in \N$, let $\mathcal{R}_t$ be the set of all labeled rooted trees on the vertex set $[t]\deq\{1,\ldots,t\}$, and let $U_t$ be chosen uniformly at random from $\mathcal{R}_t$. For $T \in \mathcal{R}_t$ and a bijection $\phi$ on $[t]$, let $\phi(T)$ be the tree obtained by relabeling the vertices in $T$ according to $\phi$. Further let $T'$
be the corresponding rooted unlabeled tree: $T' \deq \{\phi(T): \phi \mbox{ is a bijection on }[t]\}$.

Let $\{t_u : u \in H\}$ be some integers. Given that $\{\,|T_u| = t_u \mbox{ for all } u\in H\,\}$,
we know by definition of $\cG(n,p)$ that $T_u$ is independently and uniformly distributed among all labeled trees of size $t_u$ rooted at $u$. In particular, given this event each $T'_u$ is independently distributed as $U'_{t_u}$ (the unlabeled counterparts of $T_u$ and $U_{t_u}$).
On the other hand, Aldous~\cite{Aldous} (see~\cite{AP}) observed that if $T$ is a PGW-tree then $T'$ has the same distribution as $U'_t$ on the event $\{|T| = t\}$. Thus, conditioned on the event $\{\,|\tilde{T}_u| = t_u \mbox{ for all } u\in H\,\}$
we also get that $\tilde{T}'_k$ has the same distribution as $U'_{t_k}$.

We now turn to the sizes of the attached trees in $\GC$ and $\hGC$. Letting $\{t_u : u \in H\}$ be some integers and writing
$N = \sum_{u \in H} t_u\,,$
we claim that by definition of $\cG(n,p)$ every extension of the 2-core $H$ to the component $\GC$, using trees whose sizes sum up to $N$, has the same probability. To see this argue as follows. Fix $H$ and notice that the probability of obtaining a component with a 2-core is $H$ and an extension $X$ connecting it to $N-|H|$ additional vertices only depends on the number of edges in $H$ and $X$ (and the fact that this is a legal configuration, i.e., $H$ is a valid 2-core and $X$ is comprised of trees). Therefore, upon conditioning on $H$ the probabilities of the various extensions $X$ remain all equal.
Cayley's formula gives that there are $m^{m-1}$ labeled rooted trees on $m$ vertices, and so,
\begin{align}\label{eq-size-tree-distr}
\P&\left(|T_u| = t_u \mbox{ for all } u \in H \given H \right)
= \P \left(|\GC| = N \given H \right) \frac{1}{Z(N)}\frac{N!}{\prod_{u\in H} t_u !} \prod_{u\in H} t_u^{t_u -1}\nonumber\\
&= \P (|\GC| = N \given H)\frac{1}{Z'(N)} \prod_{u \in H} \Big[\frac{t_u^{t_u -1}}{\mu t_u!} (\mu \mathrm{e}^{-\mu})^{t_u}\Big],
\end{align}
where $Z(N)$ and $Z'(N)$ are the following normalizing constants
\begin{align*}
Z'(N) &= \sum_{\{r_u\}:\sum_{u\in H}r_u = N} \prod_{u \in H} \Big[\frac{r_u^{r_u -1}}{\mu r_u!} (\mu \mathrm{e}^{-\mu})^{r_u}\Big]\,,\\
 Z(N) &= Z'(N) \mu^{N-|H|} \mathrm{e}^{-\mu N}\,.
\end{align*}
It is well-known that the size of a Poisson$(\gamma)$-Galton-Watson tree $T$ follows a Borel($\gamma$) distribution, namely
\begin{equation}\label{eq-PGW-size}
\P(|T| = t)=\frac{t^{t-1}}{\gamma t!}(\gamma \mathrm{e}^{-\gamma})^{t}\,.
\end{equation}
Recalling that $\tilde{T}_u$ are independent PGW($\mu$)-trees, it follows that
\[Z'(N) = \sum_{\{r_u\}:\sum_{u\in H}r_u = N} \Big[\prod_{u\in H}\P(|\tilde{T}_u| = r_u) \Big]= \P\left(|\hGC| = N \given H\right)\,.\]
Combining this with \eqref{eq-size-tree-distr} and \eqref{eq-PGW-size}, we obtain that
\begin{align}\label{eq-prob-ratio}
\frac{\P(|T_u| = t_u \mbox{ for all } u \in H \given H )}{\P(|\tilde{T}_u| = t_u \mbox{ for all } u \in H \given H )}
= \frac{\P (|\GC| = N \given H)}{\P (|\hGC| = N \given H)}\,.
\end{align}
At this point, we wish to estimate the ratio in the right hand side above. To this end, we need the following result of~\cite{PW}.
\begin{theorem}[\cite{PW}*{Theorem 6}, reformulated]\label{thm-PW-local-limit}
Let $\mathbf{b}(\lambda) = \left(\begin{smallmatrix} b_1(\lambda) \\ b_2(\lambda) \\ b_3(\lambda)\end{smallmatrix}\right)$
where
\begin{align*}
b_1(\lambda) = (1 - \mu)\left(1 - \tfrac{\mu}\lambda\right)\,,\, b_2(\lambda) = \mu \left(1 - \tfrac{\mu}\lambda\right)\,,\,b_3(\lambda) = \tfrac12\left(1 - \tfrac\mu\lambda\right)(\lambda + \mu -2)\,.
\end{align*} There exist positive definite matrices $K_p(\lambda)$ and $K_m(\lambda)$ such that
\begin{enumerate}[(i)]
\item \label{item-PW-central-limit}$(|H|, |\GC| - |H|, |E(H)| - |H|)$ is in the limit Gaussian with a mean vector $n \mathbf{b}$ and a covariance matrix $n K_p$.
\item \label{item-PW-local-limit} If $A_m \deq K_m^{-1}$ and $B$ denotes the event that $|E(G)|=m$ for some $m= (1+o(1))\lambda n/2$, and there is a unique component of size between $\frac12(1-\frac{\mu}{\lambda}) n$ and $2(1-\frac{\mu}{\lambda} )n$ and none larger, then
\begin{align}
\P& \left(|H| = n_1, |\GC| - |H| = n_2, |E(H)| - |H| = n_3 \given B\right)\nonumber \\
&= \frac{\sqrt{\det(A_m)}+o(1)}{(2\pi n)^{3/2}} \exp\left(-\tfrac{1}{2}\mathbf{x}^T A_m \mathbf{x}\right)\,,
\end{align}
uniformly for all $(n_1, n_2, n_3) \in \N^3$ such that
\[(K_p(1, 1)^{-1/2} x_1,K_p(2, 2)^{-1/2} x_2, K_p(3, 3)^{-1/2} x_3 )\]
is bounded, where $\mathbf{x}^T=(x_1,x_2,x_3)$ is defined by
\begin{align}
  \label{eq-xt-def}
  \mathbf{x}^T =\frac{1}{\sqrt{n}}(n_1 - b_1 n, n_2 - b_2 n, n_3 - b_3 n)\,.
\end{align}
\end{enumerate}
\end{theorem}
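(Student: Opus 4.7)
The plan is to prove both parts via an explicit combinatorial decomposition of $\cG(n,m)$ according to its core structure, followed by saddle-point asymptotics, in the spirit of the kernel-path-forest decomposition already used in the paper. Fixing a target triple $(n_1,n_2,n_3)$, the number of labeled graphs in $\cG(n,m)$ with $|H|=n_1$, $|\GC|-|H|=n_2$ and $|E(H)|-|H|=n_3$ equals a sum over the auxiliary parameter $N$ (the kernel size) of a product of four factors: the count of multigraphs on $N$ vertices with all degrees $\geq 3$ and edge count $n_3+N$ (the kernel itself); a multinomial placing the $n_1-N$ degree-$2$ vertices of $H$ along the edges of the kernel; a Cayley-type count for the number of rooted forests on $n_1+n_2$ labeled vertices rooted at the $n_1$ vertices of $H$; and a count of the trees and (asymptotically bounded in number) unicyclic components on the remaining $n-n_1-n_2$ vertices. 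Dividing by $|\cG(n,m)|$ and imposing the event $B$ yields a closed-form expression.

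Next I would apply Stirling's formula factor-by-factor to rewrite the expression as $\exp(n f(n_1/n, n_2/n, n_3/n, N/n) + O(\log n))$ for a smooth concave rate function $f$, and optimize over $N$ by a secondary saddle point. The global maximum sits at the fluid values $\mathbf{b}(\lambda)$, with $\mu$ entering as the conjugate root of $\mu\mathrm{e}^{-\mu}=\lambda\mathrm{e}^{-\lambda}$ --- this is precisely where $(1-\mu)(1-\mu/\lambda)$, $\mu(1-\mu/\lambda)$, and $\tfrac12(1-\mu/\lambda)(\lambda+\mu-2)$ emerge from the variational equations for $f$. Expanding $f$ to second order in the rescaled coordinates $\mathbf{x}$ of~\eqref{eq-xt-def} produces a negative-definite quadratic form $-\tfrac12\mathbf{x}^T A_m \mathbf{x}$ which, together with the Jacobian from Laplace-integrating out $N$ and the normalization $(2\pi n)^{-3/2}\sqrt{\det A_m}$, gives the pointwise expression in~(ii). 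Part~(i) then follows by testing the local formula against bounded continuous functions and invoking tightness of $(|H|,|\GC|,|E(H)|)$, which is inherited from the contiguity to Poisson-cloning together with the concentration of $\taupc$ established in Theorem~\ref{thm-Lambda-C-upper}.

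The hard part will be the \emph{uniformity} of the local bound in~(ii) over the entire bounded region of $\mathbf{x}$. Two obstacles arise. First, the auxiliary saddle over $N$ must concentrate in a window of width $O(\sqrt{n})$ uniformly in $(n_1,n_2,n_3)$, which requires positive-definiteness of the full $4\times 4$ Hessian of $f$ and a quantitative Laplace estimate rather than a pointwise one. Second, the triple $(n_1,n_2,n_3)$ is not free in $\Z^3$: the identity $\sum_{v\in H}\deg(v)=2|E(H)|$ combined with the fact that every vertex of $H$ has degree at least $2$ constrains $n_3$ modulo a small integer, so the density prefactor must absorb the index of the resulting sublattice. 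The cleanest route handles both at once by reformulating the target as a Stone-type local CLT on the appropriate sublattice and verifying it via characteristic functions: show that the Fourier transform of the centered, rescaled triple matches a Gaussian in a neighborhood of the origin (via a $3$-variable Edgeworth expansion) and is bounded away from $1$ outside such a neighborhood (via a lower bound on the variance of any nontrivial integer combination of the three statistics, again readable from the kernel decomposition), with all estimates uniform in the stated bounded range of $\mathbf{x}$.
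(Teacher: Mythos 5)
The first thing to say is that the paper contains no proof of this statement: Theorem~\ref{thm-PW-local-limit} is imported verbatim (in reformulated form) from Pittel and Wormald \cite{PW}*{Theorem 6} and used as a black box. Your plan is, in outline, a compressed reconstruction of the actual Pittel--Wormald ``inside-out'' strategy --- decompose $\cG(n,m)$ by kernel, degree-2 path vertices, rooted forest and outside components, apply Stirling and a saddle-point analysis, and read off the quadratic form --- so the architecture is the right one. But as a proof sketch it has gaps that go beyond routine technical work.

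Concretely: (a) your derivation of part (i) is circular within this paper, since you invoke Theorem~\ref{thm-Lambda-C-upper} for tightness, while the proof of Theorem~\ref{thm-Lambda-C-upper} given in the paper itself relies on Theorem~\ref{thm-PW-local-limit}; tightness has to come from the local estimate itself (e.g.\ by summing the local formula and controlling the tails) or from an independent argument. (b) You conflate the two covariance matrices: part (ii) is a statement for the model conditioned on the edge count (effectively $\cG(n,m)$) with matrix $K_m$, while part (i) concerns the unconditional binomial model with matrix $K_p$; ``testing the local formula against bounded continuous functions'' gives at best a CLT with covariance $K_m$ for the conditioned model, and to obtain $K_p$ one must convolve with the Gaussian fluctuations of $|E(G)|$ about $\lambda n/2$ --- this is exactly why two distinct matrices appear in the statement, and your sketch does not account for the difference. (c) Your proposed ``cleanest route'' to uniformity, via characteristic functions and a three-variable Edgeworth expansion, presupposes an i.i.d.-sum structure that the triple $(|H|,|\GC|-|H|,|E(H)|-|H|)$ does not have; it is a global functional of the graph, and any control of its characteristic function would have to be extracted from the very enumeration you started with. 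In the counting route the uniformity over bounded $\mathbf{x}$ in \eqref{eq-xt-def} comes from carrying the Stirling/saddle estimates uniformly over the compact region, including uniform asymptotics for the number of graphs with minimum degree $3$ and prescribed vertex and edge counts --- a substantial sub-enumeration (with its own truncated-Poisson saddle point) that you treat as a single known factor. A further small but real point: the kernel is a multigraph while the ambient graph is simple, so loops and multiple edges must receive at least two, respectively one, subdivision vertices, and the ``multinomial placement'' of the degree-2 vertices has to respect this constraint.
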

By CLT it is clear that w.h.p.\ the total number of edges in $G \sim \cG(n,p)$ is $(1+o(1))\lambda n/2$. Furthermore, by results of \cite{Bollobas84} and \cite{Luczak90} (see also \cite{JLR}), w.h.p.\ our graph $G$ has a unique giant component of size $(1 + o(1))(1 - \mu/\lambda)n $.
Altogether, we deduce that the event $B$ happens w.h.p.; assume therefore that $B$ indeed occurs.
Recalling~\eqref{eq-xt-def} where $(x_1,x_2,x_3)$ is given as a function of $(n_1,n_2,n_3)$, define the event $Q$ by
\begin{align*}\mathcal{Q}_{M} &\deq \left\{(n_1, n_2, n_3)\in \N^3:  |x_i|\leq M~\mbox{for $i=1,2,3$} \right\}\,,\\
Q &\deq \big\{(|H|, |\GC|- |H|, |E(H)| - |H|) \in \mathcal{Q}_M\big\}\,.
\end{align*}
By part~\eqref{item-PW-central-limit} of Theorem~\ref{thm-PW-local-limit}, for any fixed $\delta > 0$ there exists some $M > 0$ such that $\P(Q^c) < \delta$ for a sufficiently large $n$. Next, define
\begin{align*}
P_{\max} & = \max_{(n_1, n_2, n_3)\in \mathcal{Q}_M} \P \left(|H| = n_1,\; |\GC| - |H| = n_2,\; |E(H)| - |H| = n_3 \right)\,,\\
P_{\min} & = \min_{(n_1, n_2, n_3)\in \mathcal{Q}_M} \P \left(|H| = n_1,\; |\GC| - |H| = n_2,\; |E(H)| - |H| = n_3 \right)\,.
\end{align*}
It follows from part~\eqref{item-PW-local-limit} of Theorem~\ref{thm-PW-local-limit} that there exists some $c=c(M) > 0$ such that
\begin{equation}\label{eq-P-M-P-m}P_{\max} \leq c \cdot P_{\min}\,,\end{equation}
when $n$ is sufficiently large.
Notice that by definition of $\mathbf{x}$,
\begin{equation*}\#\{n_2 \in \N: |x_2|\leq M \} \geq M \sqrt{n}\,.\end{equation*}
Combined with \eqref{eq-P-M-P-m}, it follows that for any $(n_1, n_2, n_3)\in \mathcal{Q}_M$ we have
\begin{equation}\label{eq-G-local}
\P\left( |\GC|= n_1 + n_2~,~ Q  \given |H| = n_1\right) \leq \frac{c}{M \sqrt{n}}\,.
\end{equation}
With this estimate for $\P(|\GC|=N \given H)$, the numerator in the right-hand-side of \eqref{eq-prob-ratio}, it remains to estimate the denominator, $\P(|\hGC|=N \given H)$.

Recall that, given $H$, the quantity $|\hGC|$ is a sum of $|H|$ i.i.d.\ Borel($\mu$) random variables (each such
variable is the size of a PGW($\mu$)-tree). We now wish to derive a local limit theorem
for $|\hGC|$, to which end we again apply Theorem~\ref{thm-local}:
It is well known (and easy to show, e.g.~\cite{DLP}*{Claim 4.2}) that a Borel($\mu$) variable has expectation $1/(1-\mu)$ and variance $\mu/(1-\mu)^3$ for any $0 < \mu < 1$.
Recalling the definition of $\mathcal{Q}_m$, we are interested in the following range for $n_1$ and $n_2$:
\begin{align*} n_1 &= (1-\mu)(1-\tfrac{\mu}{\lambda})n + c_1 \sqrt{n}&(|c_1| \leq M)\,,\\
n_2 &= \mu(1-\tfrac{\mu}{\lambda})n + c_2 \sqrt{n}&(|c_2| \leq M)\,.
\end{align*}
Applying the local CLT now implies that for some $\delta'>0$,
\begin{equation}\label{eq-tilde-G-local} \P(|\hGC|= n_1+n_2 \given |H|=n_1) \geq \delta'/\sqrt{n}\,.
\end{equation}
Combining \eqref{eq-G-local} and \eqref{eq-tilde-G-local}, we obtain that when $n$ is sufficiently large,
 \[\frac{\P \left(|\GC| = N~,~Q \given |H|\right)}{\P \left(|\hGC| =  N \given |H|\right)} \leq \frac{c}{M \delta'}\,.\]
By \eqref{eq-prob-ratio} (and recalling that conditioned on $|T_i|$ the tree $T_i$ is uniformly distributed among all unlabeled trees of this size, and similarly for $\tilde{T}_i$), we conclude that for some $c' =c'(M)> 0$ and any unlabeled graph $A$
\begin{align}\label{eq-GC-hat-C}
\P (\GC =A\,,Q\,,B \mid H) \leq c'\,\P (\hGC = A \mid H)\,.
\end{align}

We are now ready to conclude the proof of the main theorem. Let $\tGC$ be defined as in Theorem~\ref{mainthm-struct}. For any set of simple graphs $\mathcal{A}$, define
\begin{equation}
  \label{eq-cH-def}
  \mathcal{H} = \left\{H : \P(\GC \in \mathcal{A}\, , Q\,, B\mid \TC = H) \geq (\P(\tGC \in \mathcal{A}))^{1/2}\right\}\,.
\end{equation}

Recall that by definition, $\tGC$ is produced by first constructing its 2-core (first two steps of the description), then attaching to each of its vertices independent PGW($\mu$)-trees. Hence, for any $H$, the graphs $\hGC$ and $\tGC$ have the same conditional distribution given $\TC[\hGC] = \TC[\tGC] = H$. It then follows from \eqref{eq-GC-hat-C},\eqref{eq-cH-def} that for some constant $c''>0$ and any $H \in \mathcal{H}$,
\[\P(\tGC \in \mathcal{A} \mid \TC[\tGC] = H) \geq c'' (\P(\tGC \in \mathcal{A}))^{1/2}\,.\]
Since we have
\[ \P(\tGC \in \mathcal{A}) \geq c'' (\P(\tGC \in \mathcal{A}))^{1/2} \P(\TC[\tGC] \in \mathcal{H}) \,,\]
the assumption that $\P(\tGC \in \mathcal{A}) \to 0$ gives that $\P(\TC[\tGC] \in \mathcal{H}) \to 0$.

At this point, we combine all the contiguity results thus far to claim that, for any family of simple graphs $\mathcal{F}$,
\[ \P(\TC[\tGC] \in \mathcal{F}) = o(1) \mbox{ implies that }\P(\TC \in \mathcal{F})=o(1)\,.\]
Indeed, by definition, the 2-core of $\tGC$ is precisely the Poisson-geometric model, conditioned on the sum of the degrees ($\sum_u D_u \one_{D_u \geq 3}$) being even. Thus, as $\mathcal{F}$ has only simple graphs,
clearly we may consider the model conditioned on producing a simple graph and in particular that $\sum_u D_u \one_{D_u \geq 3}$ is even.
Applying Theorem~\ref{thm-Poisson-contiguity} (contiguity with Poisson-configuration), Theorem~\ref{thm-Lambda-contiguity} (contiguity with Poisson-cloning) and Theorem~\ref{thm-poisson-ER} (contiguity with Erd\H{o}s-R\'enyi graphs), in that order, now gives the above statement.

This fact and the arguments above now give that $\P(\TC \in \mathcal{H}) \to 0$. By the definition of $\mathcal{H}$, we now conclude that
\[\P(\GC \in \mathcal{A}) \leq \P(B^c) + \P(Q^c) + \P(\TC \in \mathcal{H}) + (\P(\tGC \in \mathcal{A}))^{1/2}\,,\]
where the last term converges to $0$ by assumption.
Taking a limit, we get that $\lim\sup_{n \to \infty}\P(\GC \in \mathcal{A}) \leq \delta$ and the proof is completed by letting $\delta \to 0$.
\end{proof}

\begin{bibdiv}
\begin{biblist}

\bib{ABG}{article}{
   author={Addario-Berry, L.},
   author={Broutin, N.},
   author={Goldschmidt, C.},
   title={Critical random graphs: limiting constructions and distributional
   properties},
   journal={Electron. J. Probab.},
   volume={15},
   date={2010},
   pages={no. 25, 741--775},
}

\bib{Aldous2}{article}{
   author={Aldous, David},
   title={Brownian excursions, critical random graphs and the multiplicative
   coalescent},
   journal={Ann. Probab.},
   volume={25},
   date={1997},
   number={2},
   pages={812--854},
}

\bib{Aldous}{article}{
   author={Aldous, David},
   title={The continuum random tree. III},
   journal={Ann. Probab.},
   volume={21},
   date={1993},
   number={1},
   pages={248--289},
}

\bib{AP}{article}{
   author={Aldous, David},
   author={Pitman, Jim},
   title={Tree-valued Markov chains derived from Galton-Watson processes},
   journal={Ann. Inst. H. Poincar\'e Probab. Statist.},
   volume={34},
   date={1998},
   number={5},
   pages={637--686},
}



\bib{BKW}{article}{
    author = {Benjamini, Itai},
    author= {Kozma, Gadi},
    author= {Wormald,  Nicholas C.},
    title= {The mixing time of the giant component of a random graph},
    note = {preprint, available at \texttt{http://arxiv.org/abs/math/0610459}},
}


\bib{Bollobas1}{article}{
   author={Bollob{\'a}s, B{\'e}la},
   title={A probabilistic proof of an asymptotic formula for the number of labelled regular graphs},
   journal={European J. Combin.},
   volume={1},
   date={1980},
   number={4},
   pages={311--316},
}

\bib{Bollobas2}{book}{
  author={Bollob{\'a}s, B{\'e}la},
  title={Random graphs},
  series={Cambridge Studies in Advanced Mathematics},
  volume={73},
  edition={2},
  publisher={Cambridge University Press},
  place={Cambridge},
  date={2001},
  pages={xviii+498},
}

\bib{Bollobas84}{article}{
   author={Bollob{\'a}s, B{\'e}la},
   title={The evolution of random graphs},
   journal={Trans. Amer. Math. Soc.},
   volume={286},
   date={1984},
   number={1},
   pages={257--274},
}


\bib{DKLP1}{article}{
    author = {Ding, Jian},
    author = {Kim, Jeong Han},
    author = {Lubetzky, Eyal},
    author = {Peres, Yuval},
    title  = {Anatomy of a young giant component in the random graph},
    journal= {Random Structures Algorithms},
    volume={39},
    date={2011},
    number={2},
    pages={139--178},
}

\bib{DKLP2}{article}{
   author={Ding, Jian},
   author={Kim, Jeong Han},
   author={Lubetzky, Eyal},
   author={Peres, Yuval},
   title={Diameters in supercritical random graphs via first passage percolation},
   journal={Combin. Probab. Comput.},
   volume={19},
   date={2010},
   number={5-6},
   pages={729--751},
}

\bib{DLP}{article}{
    author = {Ding, Jian},
    author = {Lubetzky, Eyal},
    author = {Peres, Yuval},
    title = {The mixing time of the near-critical random graph},
    journal= {Annals of Probability},
    status = {to appear},
}

\bib{Durrett}{book}{
   author={Durrett, Richard},
   title={Probability: theory and examples},
   edition={3},
   publisher={Duxbury Press},
   date={2004},
   pages={528},
}

\bib{ER59}{article}{
   author={Erd{\H{o}}s, P.},
   author={R{\'e}nyi, A.},
   title={On random graphs. I},
   journal={Publ. Math. Debrecen},
   volume={6},
   date={1959},
   pages={290--297},
}




\bib{JLR}{book}{
   author={Janson, Svante},
   author={{\L}uczak, Tomasz},
   author={Rucinski, Andrzej},
   title={Random graphs},
   series={Wiley-Interscience Series in Discrete Mathematics and
   Optimization},
   publisher={Wiley-Interscience, New York},
   date={2000},
   pages={xii+333},
}



\bib{KimA}{article}{
    author = {Kim, Jeong Han},
    title = {Finding cores of random 2-SAT formulae via Poisson cloning},
    note = {Preprint, \texttt{http://arxiv.org/abs/0808.1599}},
}

\bib{KimB}{article}{
   author={Kim, Jeong Han},
   title={Poisson cloning model for random graphs},
   conference={
      title={International Congress of Mathematicians. Vol. III},
   },
   book={
      publisher={Eur. Math. Soc., Z\"urich},
   },
   date={2006},
   pages={873--897},
}

\bib{Luczak90}{article}{
   author={{\L}uczak, Tomasz},
   title={Component behavior near the critical point of the random graph process},
   journal={Random Structures Algorithms},
   volume={1},
   date={1990},
   number={3},
   pages={287--310},
}

\bib{Luczak91}{article}{
   author={{\L}uczak, Tomasz},
   title={Cycles in a random graph near the critical point},
   journal={Random Structures Algorithms},
   volume={2},
   date={1991},
   number={4},
   pages={421--439},
}

\bib{LPW}{article}{
   author={{\L}uczak, Tomasz},
   author={Pittel, Boris},
   author={Wierman, John C.},
   title={The structure of a random graph at the point of the phase
   transition},
   journal={Trans. Amer. Math. Soc.},
   volume={341},
   date={1994},
   number={2},
   pages={721--748},
}





\bib{PW}{article}{
   author={Pittel, Boris},
   author={Wormald, Nicholas C.},
   title={Counting connected graphs inside-out},
   journal={J. Combin. Theory Ser. B},
   volume={93},
   date={2005},
   number={2},
   pages={127--172},
}

\bib{RW}{article}{
    author = {Riordan, Oliver},
    author= {Wormald, Nicholas C.},
   title={The diameter of sparse random graphs},
   journal={Combin. Probab. Comput.},
   volume={19},
   date={2010},
   number={5-6},
   pages={835--926},
}



\bib{Wormald99}{article}{
   author={Wormald, N. C.},
   title={Models of random regular graphs},
   conference={
      title={Surveys in combinatorics},
   },
   book={
      series={London Math. Soc. Lecture Note Ser.},
      volume={267},
      publisher={Cambridge Univ. Press},
   },
   date={1999},
   pages={239--298},
}

\end{biblist}
\end{bibdiv}
\end{document}